\providecommand*{\Dashv}{%
  \mathrel{%
    \mathpalette\@Dashv\vDash
  }%
}
\newcommand*{\@Dashv}[2]{%
  \reflectbox{$\m@th#1#2$}%
}
\newcommand{\meet}{\wedge}
\newcommand{\join}{\vee}
\newcommand{\var}{{\sf Var}}
\newcommand{\luc}{\text{\textbf{\L}}}
\newcommand{\eq}{\approx}
\newcommand{\m}{\bf}
\newcommand{\mvi}{MV(I)}
\newcommand{\sfi}{S4MV(I)}
\newcommand{\sfmv}{S4MV}
\newcommand{\tmv}{S4$_t$MV}
\newcommand{\varmvi}{${\sf MV}(I)$}
\newcommand{\varsfi}{${\sf S4MV}(I)$}
\newcommand{\varsfmv}{${\sf S4MV}$}
\newcommand{\vartmv}{${\sf S4_t MV}$}
\newcommand{\MV}{{\sf MV}}
\newcommand{\iffprod}{\ast}
\newcommand{\up}{\mathord{\uparrow}}
\newcommand{\sm}{\mathcal{L}(\Box)}
\newcommand{\tm}{\mathcal{L}(G,H)}
\newcommand{\bic}{\leftrightarrow}
\begin{document}
\title{Some modal and temporal translations of generalized basic logic\thanks{This project received funding from the European Research Council (ERC) under the European Union’s Horizon 2020 research and innovation program (grant agreement No. 670624).}}
%
%
\author{Wesley Fussner\inst{1} \and
William Zuluaga Botero\inst{1,2}}
\authorrunning{W. Fussner and W. Zuluaga Botero}
%
\institute{Laboratoire J.A. Dieudonn\'e, CNRS, and Universit\'e C\^ote d'Azur, France\\
\and Departamento de Matem\'atica, Facultad de Ciencias Exactas, Universidad Nacional del Centro, Argentina\\
\email{wfussner@unice.fr}\\
\email{wizubo@gmail.com}}
\maketitle    
\begin{abstract}
We introduce a family of modal expansions of \L{}ukasiewicz logic that are designed to accommodate modal translations of generalized basic logic (as formulated with exchange, weakening, and falsum). We further exhibit algebraic semantics for each logic in this family, in particular showing that all of them are algebraizable in the sense of Blok and Pigozzi. Using this algebraization result and an analysis of congruences in the pertinent varieties, we establish that each of the introduced modal \L{}ukasiewicz logics has a local deduction-detachment theorem. By applying Jipsen and Montagna's poset product construction, we give two translations of generalized basic logic with exchange, weakening, and falsum in the style of the celebrated G\"odel-McKinsey-Tarski translation. The first of these interprets generalized basic logic in a modal \L{}ukasiewicz logic in the spirit of the classical modal logic {\m S4}, whereas the second interprets generalized basic logic in a temporal variant of the latter.

\keywords{GBL-algebras  \and modal logic \and modal translations.}
\end{abstract}
\section{Introduction}

Generalized basic logic (see, e.g., \cite{BM2009,JM2006}) is a common fragment of intuitionistic propositional logic and H\'ajek's basic fuzzy logic \cite{H1998}. It originates from algebraic studies of substructural logics, and its algebraic models (viz. \emph{GBL-algebras}) provide a natural common generalization of lattice-ordered groups, Heyting algebras, and continuous t-norm based logic algebras (see \cite{GJ2009} for a survey). In this capacity, generalized basic logic has been highly influential in the development of residuated lattices \cite{GJKO2007}, which provide the algebraic semantics of substructural logics. Generalized basic logic has also been proposed as a model of flexible resources \cite{BM2009}, in keeping with resource-driven interpretations of substructural logics generally (see, e.g., \cite{PO1999}).

When extended by exchange, weakening, and falsum (as we do throughout the sequel), generalized basic logic may be regarded as an `intuitionistic' variant of H\'ajek's basic logic. In this formulation, generalized basic logic is related to \L{}ukasiewicz logic \cite{CDM2000} in much the same way that intuitionistic logic is related to classical logic. For instance, generalized basic logic admits a Kripke-style relational semantics \cite{F2021} in which worlds are valued in MV-algebra chains, mirroring the well known Kripke semantics for intuitionistic logic (in which worlds are valued in the $2$-element Boolean algebra). It is evident from \cite{F2021} that generalized basic logic may be viewed as a fragment of a modal \L{}ukasiewicz logic, but the details of this modal connection are therein left implicit. On the other hand, \cite{F2021} generalizes the temporal flow semantics for basic logic \cite{ABM2009}, which is deployed in \cite{AGM2008} to obtain a modal translation of G\"odel-Dummett logic into an extension of Prior's tense logic \cite{P1957}. Inspired by this work, the present study makes the modal connection from \cite{F2021} explicit and offers modal and temporal translations of generalized basic logic into certain expanded \L{}ukasiewicz logics. Like \cite{AGM2008}, these translations are  directly analogous to the well known G\"odel-McKinsey-Tarski translation of intuitionistic logic into the classical modal logic {\m S4}, and are connected to the broader theory of modal companions of superintuitionistic logics. In addition to clarifying the role of modality in generalized basic logic, we expect that these results open up the application of tools from fuzzy modal logic (such as filtration \cite{EGR2017}) to the analysis of generalized basic logic and its extensions.

Our contributions are as follows. First, we introduce in Section~\ref{sec:logics} a family of modal \L{}ukasiewicz logics that serve as targets for our translations. This family includes both monounary modal systems, analogous to classical {\m S4}, as well as multimodal systems of temporal \L{}ukasiewicz logic. This investigation is rooted in algebraic logic, and in Section~\ref{sec:algebras} we provide pertinent information on algebras related to this study. In Section~\ref{sec:algebraization}, we demonstrate that all of the logics introduced in Section~\ref{sec:logics} are algebraizable in the sense of Blok and Pigozzi (see \cite{BP1989}) and that the algebras introduced in Section~\ref{sec:residuated lattices} provide their equivalent algebraic semantics. Equipped with this algebra-logic bridge, Section~\ref{sec:deduction} puts our algebraization theorem to work and establishes a local deduction detachment-theorem for our modal \L{}ukasiewicz logics. The work of Section~\ref{sec:deduction} is based on an analysis of congruences in the varieties of algebras introduced in Section~\ref{sec:residuated lattices}, and in particular establishes the congruence extension property for each of these varieties. Finally, in Section~\ref{sec:translations} we introduce two translations of generalized basic logic, one into a \L{}ukasiewicz version of {\m S4} and the other into a temporal \L{}ukasiewicz logic. These translations both rely on the poset product construction of Jipsen and Montagna (see, e.g., \cite{JM2010}).

\section{Generalized basic logic and fuzzy modal logics}\label{sec:logics}
This section introduces the logical systems of our inquiry. The logics discussed in this paper are all defined over supersets of the propositional language $\mathcal{L}$ consisting of the binary connectives $\meet,\join,\cdot,\to$ and the constants $0,1$. To the basic language $\mathcal{L}$ we will adjoin a set of box-like unary modal connectives. More specifically, given a set $I$ of unary connective symbols with $I\cap\mathcal{L}=\emptyset$, we define a language $\mathcal{L}(I) = \mathcal{L}\cup I$. We further fix a countably-infinite set $\var$ of propositional variables, and define the set $Fm_{\mathcal{L}(I)}$ of $\mathcal{L}(I)$-formulas over $\var$ in the usual way. An \emph{${\mathcal{L}}(I)$-equation} is an ordered pair in $(\varphi,\psi)\in Fm_{\mathcal{L}(I)}$, and we usually denote the equation $(\varphi,\psi)$ by $\varphi\eq\psi$. The set of all ${\mathcal{L}}(I)$-equations is denoted by $Eq_{{\mathcal{L}}(I)}$. All of the logics we consider may be defined by Hilbert-style calculi using various selections from the axiom schemes and deduction rules depicted in Figure~\ref{fig:calculus}. Observe that in Figure~\ref{fig:calculus} each of (K$_\Box$), (P$_\Box$), (M$_\Box$), ($1_\Box $), ($0_\Box $), (T$_\Box$), (\MVFour$_\Box$), (GP), (HF), and ($\Box$-Nec) gives a family of axiom schemes/rules parameterized by the unary connectives $\Box$, $G$, $H$. Note that we write $\varphi\bic\psi$ for $(\varphi\to\psi)\meet (\psi\to\varphi)$ and $\neg\varphi$ for $\varphi\to 0$ as usual.

\begin{figure}
\setlength{\columnsep}{2.5cm}
\begin{multicols}{2}
    \begin{itemize}[align=left]
	\item[\hspace{1.3cm}] {\bf Axiom schemes}
	\item[(A1)] $\varphi \to \varphi$
	\item[(A2)] \mbox{$(\varphi \to \psi) \to ((\psi \to \chi) \to (\varphi \to \chi))$}
	\item[(A3)] $(\varphi \cdot \psi) \to (\psi \cdot \varphi)$
	\item[(A4)] $(\varphi \cdot \psi) \to \psi$
	\item[(A5)] \mbox{$(\varphi \to (\psi \to \chi)) \to ((\varphi \cdot\psi) \to \chi))$}
	\item[(A6)] \mbox{$((\varphi \cdot\psi) \to \chi)) \to (\varphi \to (\psi \to \chi))$}
	\item[(A7)] $(\varphi \cdot (\varphi \to \psi)) \to (\varphi \wedge \psi)$
	\item[(A8)] $(\varphi \wedge \psi) \to (\varphi \cdot (\varphi \to \psi))$
	\item[(A9)] $(\varphi \wedge \psi) \to (\psi \wedge \varphi)$
	\item[(A10)] $\varphi \to (\varphi \vee \psi)$
	\item[(A11)] $\psi \to (\varphi \vee \psi)$
	\item[(A12)] \mbox{$((\varphi \to \psi) \wedge (\chi \to \psi)) \to ((\varphi \vee \chi) \to \psi)$}
	\item[(A13)] $0 \to \varphi$
	\item[(A14)] $(\varphi\to\psi)\join (\psi\to\varphi)$
	\item[(A15)] $\neg\neg \varphi \leftrightarrow \varphi$
	\item[(K$_\Box$)] \mbox{$\Box (\varphi \to \psi)\to (\Box \varphi\to\Box \psi)$}
	\item[(P$_\Box$)] $\Box (\varphi\cdot \psi) \leftrightarrow \Box \varphi \cdot \Box \psi$
	\item[(M$_\Box$)] $\Box (\varphi \meet \psi) \leftrightarrow \Box \varphi \meet \Box \psi$
	\item[($1_\Box $)] $\Box 1 \leftrightarrow 1$
	\item[($0_\Box $)] $\Box 0 \leftrightarrow 0$
	\item[(T$_\Box$)] $\Box \varphi \to \varphi$
	\item[(\MVFour$_\Box$)] $\Box \varphi \to \Box\Box \varphi$
	\item[(GP)] $\varphi\to G\neg H\neg\varphi$
	\item[(HF)] $\varphi\to H\neg G\neg\varphi$
	\item[\vspace{\fill}]
	\item[\hspace{1.3cm}] {\bf Rules}
	\item[(MP)] $\varphi, \varphi \to \psi \vdash \psi$
	\item[($\Box$-Nec)] $\varphi\vdash\Box\varphi$
\end{itemize}
\end{multicols}

\caption{Axiom schemes and rules for the logics considered.}
\label{fig:calculus}
\end{figure}

From \cite{BM2009}, \emph{generalized basic logic with exchange, weakening, and falsum} is the logic defined over $\mathcal{L}$ by the calculus with (A1)--(A13) and the modus ponens rule (MP). We denote this logic by $\bf GBL$. Additionally including the prelinearity axiom (A14) yields H\'ajek's basic fuzzy logic \cite{H1998}, which we denote by $\bf BL$. It follows from \cite{CT2003} that including both (A14) and (A15) gives an axiomatization of the infinite-valued \L{}ukasiewicz logic $\luc$ (see, e.g., \cite{CDM2000}).

We will consider a number of different modal expansions of $\luc$ in this study. For an arbitrary set $I$ of unary connective symbols disjoint from $\mathcal{L}$, we denote by $\luc(I)$ the logic with language $\mathcal{L}(I)$, axiom schemes (A1)--(A15), (K$_\Box$), (P$_\Box$), (M$_\Box$), ($1_\Box$), and ($0_\Box$) (where $\Box$ ranges over $I$ in all of the preceding axiom schemes), and rules (MP) and ($\Box$-Nec) (where again $\Box$ ranges over $I$). We denote by ${\bf S4\luc}(I)$ the logic resulting from adding to $\luc(I)$ the axiom schemes (T$_\Box$) and (\MVFour$_\Box$) for all $\Box\in I$. If $I=\{\Box\}$ is a singleton, we write ${\bf S4\luc}$ for ${\bf S4\luc}(I)$. If $I=\{G,H\}$, then the logic defined by adding to ${\bf S4\luc}(I)$ the axioms (GP) and (HF) will be denoted by ${\bf S4_t\luc}$.

The logic ${\bf S4\luc}$ is a fuzzy analogue of the classical modal logic $\bf S4$, whereas ${\bf S4_t\luc}$ is a temporal variant of ${\bf S4\luc}$ inspired by Prior's tense logic \cite{P1957}. The names of the axioms (GP) and (HF) derive from the fact that---as usual in tense logic---we define modal diamond connectives $P$ and $F$ as abbreviations for $\neg H\neg$ and $\neg G\neg$, respectively. The typical intended interpretations of the modals $G,P,H,F$ are:
\begin{itemize}
\item $G\varphi$: ``It is always \textbf{g}oing to be the case that $\varphi$.''
\item $P\varphi$: ``It was true at one point in the \textbf{p}ast that $\varphi$.''
\item $H\varphi$: ``It always \textbf{h}as been the case that $\varphi$.''
\item $F\varphi$: ``It will be true at some point in the \textbf{f}uture that $\varphi$.''
\end{itemize}
In Section~\ref{sec:translations}, we will exhibit translations of ${\bf GBL}$ into each of ${\bf S4\luc}$ and ${\bf S4_t\luc}$. These translations closely mirror the G\"odel-McKinsey-Tarski translation of propositional intuitionistic logic into {\bf S4}. Intuitively, ${\bf S4\luc}$ is a modal companion of $\bf GBL$ (see \cite{CZ1992}). On the other hand, our translation into ${\bf S4_t\luc}$ generalizes the translation presented in \cite{AGM2008} of G\"odel-Dummett logic into Prior's classical tense logic.

Given a logic $\bf L$, we denote by $\vdash_{\bf L}$ the consequence relation corresponding to $\bf L$ (see \cite{F2016} for background on consequence relations). As one may anticipate from the presence of the axioms (K$_\Box$) and $\Box$-necessitation rules, the logics we have introduced above turn out to be algebraizable in the sense of Blok and Pigozzi \cite{BP1989} (see Theorem~\ref{thm:algebraization}).

\section{Algebraic semantics for $\luc (I)$ and its extensions}\label{sec:algebras}

We now turn to providing algebraic semantics for the logics introduced in Section~\ref{sec:logics}. In Section~\ref{sec:residuated lattices} we describe the pertinent algebraic structures, and then in Section~\ref{sec:algebraization} we give the algebraization results for the logics we have introduced. We assume familiarity with the basics of universal algebra \cite{BS1981}, residuated lattices \cite{GJKO2007}, and abstract algebraic logic \cite{F2016}, but where possible we provide specific references to some key background results that we invoke without full discussion.

\subsection{Residuated lattices and their expansions}\label{sec:residuated lattices}

An algebra $(A,\meet,\join,\cdot,\to,0,1)$ is called a \emph{bounded commutative integral residuated lattice} if $(A,\meet,\join,0,1)$ is a bounded lattice, $(A,\cdot,1)$ is a commutative monoid, and for all $x,y,z\in A$,
$$x\cdot y\leq z \iff x\leq y\to z.$$
We usually abbreviate $x\cdot y$ by $xy$.

By a \emph{GBL-algebra} we mean a bounded integral commutative residuated lattice that satisfies the \emph{divisibility} identity $x(x\to y) \eq x\meet y$.\footnote{Most studies refer to these algebras as \emph{bounded commutative GBL-algebras} or \emph{GBL$_{ewf}$-algebras}. Because we always assume boundedness and commutativity, we call them GBL-algebras in order to simplify terminology.} A \emph{BL-algebra} is a GBL-algebra that satisfies $(x\to y)\join (y\to x)\eq 1$, and an \emph{MV-algebra} is a BL-algebra that satisfies $\neg\neg x \eq x$. The following definition gives the various classes of MV-algebra expansions that algebraize the logics of Section~\ref{sec:logics}.
\begin{definition}\label{def:MV(I)-alg}
Let $I$ be a set of unary function symbols. We say that an algebra ${\m A} = (A,\meet,\join,\cdot,\to,0,1, \{\Box\}_{\Box\in I})$ is an \emph{\mvi-algebra} provided that:
\begin{enumerate}\label{def:MV-alg}
\item $(A,\meet,\join,\cdot,\to,0,1)$ is an MV-algebra.
\item For every $\Box\in I$, $\Box$ is a $\{\wedge,\cdot,0,1\}$-endomorphism of $(A,\meet,\join,\cdot,\to,0,1)$.
\end{enumerate}
If additionally $\Box$ is an interior operator for every $\Box\in I$, then we say that $\mathbf{A}$ is an \emph{\sfi-algebra}. An \emph{\sfmv-algebra} is an \sfi-algebra where $I=\{\Box\}$ is a singleton. An \sfi-algebra for $I=\{G,H\}$ is called an \emph{\tmv-algebra} if the map $P$ defined by $P(x)= \neg H(\neg x)$ is the lower residual of $G$, i.e., for every $x,y\in A$,
$$x\leq G(y) \iff P(x) \leq y.$$
In each \tmv-algebra, we also abbreviate $\neg G(\neg x)$ by $F(x)$.
\end{definition}

The following summarizes some technical facts regarding \tmv-algebras. Its proof is straightforward and we omit it.

\begin{lemma}\label{lem:identities in tmv}
Let $\mathbf{A}$ be an \tmv-algebra and let $x,y\in A$. Then:
\begin{enumerate}
\item $P(x\join y) = P(x)\join P(y)$.
\item $P(0)=0$ and $P(1)=1$.
\item $x\leq H(y)$ if and only if $F(x)\leq y$.
\item $F(x\join y) = F(x)\join F(y)$.
\item $F(1)=1$ and $F(0)=0$.
\item $x\to GP(x) = 1$ and $PG(x)\to x=1$.
\item $x\to HF(x) = 1$ and $FH(x)\to x=1$. 
\item $P$ and $F$ are closure operators.
\end{enumerate}
\end{lemma}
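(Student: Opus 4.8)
The plan is to derive each item from the defining adjunctions $x \leq G(y) \iff P(x) \leq y$ and the fact that $\Box$-operators are $\{\wedge,\cdot,0,1\}$-endomorphisms that are interior operators, using the standard dualities between residuated pairs and between closure/interior operators induced by the involution $\neg$. The overarching strategy is that, in an MV-algebra, the De Morgan laws hold and $\neg$ is an order-reversing involution, so any statement about $G$ and $H$ dualizes to a statement about $F = \neg G \neg$ and $P = \neg H \neg$, and an operator is a closure operator iff its De Morgan dual is an interior operator.

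First I would establish item (3): since $P$ is the lower residual of $G$, the pair $(P, G)$ is an adjunction; applying $\neg$ to both sides of $x \leq H(y)$ and using $\neg \neg = \mathrm{id}$ together with the definition $F = \neg G \neg$ and $P = \neg H \neg$ turns the $(P,G)$-adjunction into the $(F,H)$-adjunction $x \leq H(y) \iff F(x) \leq y$. Concretely, $x \leq H(y)$ iff $\neg H(y) \leq \neg x$ iff $P(\neg y) \leq \neg x$ (rewriting $\neg H(y) = \neg H(\neg \neg y) = P(\neg y)$) iff $\neg y \leq G(\neg x)$ (by the $(P,G)$-adjunction) iff $\neg G(\neg x) \leq y$ iff $F(x) \leq y$. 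Next, items (1), (2), (4), (5): a lower residual (left adjoint) preserves all existing joins and the bottom; hence $P$ preserves binary joins and $P(0) = 0$ immediately, and $G$ preserves meets and the top, so $G(1) = 1$, whence $P(1) = 1$ follows because $1 \leq G(1)$ forces $P(1) \leq 1$. Then (4) and (5) for $F$ follow from (3) by the identical adjunction argument, or alternatively by dualizing (1), (2) via $F = \neg P \neg$ — wait, more directly: $F(x \vee y) = \neg G(\neg(x \vee y)) = \neg G(\neg x \wedge \neg y) = \neg(G(\neg x) \wedge G(\neg y)) = \neg G (\neg x) \vee \neg G(\neg y) = F(x) \vee F(y)$, using that $G$ preserves meets, and similarly $F(0) = \neg G(1) = \neg 1 = 0$, $F(1) = \neg G(0) = \neg 0 = 1$.

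For items (6) and (7): these are exactly the unit/counit inequalities of the adjunctions expressed in the residuated-lattice language, since $a \to b = 1 \iff a \leq b$. From the $(P,G)$-adjunction, taking $y = P(x)$ gives $x \leq GP(x)$ (unit), i.e. $x \to GP(x) = 1$, and taking $x = G(y)$ gives $PG(y) \leq y$ (counit), i.e. $PG(y) \to y = 1$. Item (7) is the same for the $(F,H)$-adjunction established in (3). Finally item (8): $P$ is a closure operator because it is the left adjoint of the interior operator $G$ — extensivity is (6), and idempotence/monotonicity follow from the adjunction together with idempotence of $G$ in the standard way (monotone because left adjoints are monotone; for idempotence, $PP \leq P$ from $P \leq PG P \leq P$... more carefully, $GP$ is a closure operator as the monad of the adjunction with the idempotent $G$, so $P = P \circ GP$ up to the usual argument). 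Likewise $F$ is a closure operator as the left adjoint of the interior operator $H$.

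The main obstacle — though a mild one — is being careful with the idempotence half of item (8): one must use that $G$ is genuinely an interior operator (in particular idempotent: $GG = G$), not merely that $(P,G)$ is an adjunction, since an arbitrary right adjoint need not be idempotent and then $P$ need not be idempotent either. The clean way is to invoke the general fact that if $f \dashv g$ is an adjunction and $g$ is an idempotent comonad (interior operator), then $f$ is an idempotent monad (closure operator); this is purely order-theoretic and dispatches (8) for both $P$ and $F$ at once. Everything else is a routine unwinding of definitions together with the MV-algebra De Morgan laws, which is why the authors are comfortable omitting the proof.
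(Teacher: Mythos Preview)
The paper omits the proof entirely (``its proof is straightforward and we omit it''), so there is no line-by-line comparison to make; your outline is the natural one and almost certainly what the authors have in mind. The adjunction/De~Morgan strategy you describe is correct, and your derivations of (3), (4)--(5), (6)--(7), and (8) are sound. For (8), both routes you mention work: either observe directly that $P=\neg H\neg$ and $F=\neg G\neg$ are closure operators because $H,G$ are interior operators and $\neg$ is an order-reversing involution (so $\neg H\neg\neg H\neg=\neg HH\neg=\neg H\neg$, and $H(\neg x)\leq\neg x$ gives $x\leq \neg H\neg x$), or use the adjunction together with idempotence of $G$: for all $y$, $PP(x)\leq y\iff P(x)\leq G(y)\iff x\leq GG(y)=G(y)\iff P(x)\leq y$, so $PP=P$, and extensivity follows from $x\leq GP(x)\leq P(x)$.

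One small slip: your argument for $P(1)=1$ reads ``$1\leq G(1)$ forces $P(1)\leq 1$,'' which is the trivial direction. Left adjoints need not preserve the top in general. The quickest fix is the direct computation $P(1)=\neg H(\neg 1)=\neg H(0)=\neg 0=1$ using that $H$ preserves $0$; alternatively, once you have extensivity of $P$ from (8) you get $1\leq P(1)$ for free.
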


It is well known that bounded commutative integral residuated lattices form a variety, and hence so do the classes of GBL-algebras, BL-algebras, and MV-algebras. We denote these varieties by $\sf GBL$, $\sf BL$, and $\sf MV$, respectively.

\begin{lemma}\label{lem: tMV is a variety}
Let $I$ be a set of unary function symbols with $\mathcal{L}\cap I=\emptyset$. The class of \mvi-algebras forms a variety, and the class of \sfi-algebras is a subvariety of the latter. Moreover, the class of \tmv-algebras forms a subvariety of the variety of S4MV(G,H)-algebras.
\end{lemma}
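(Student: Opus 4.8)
The plan is to handle the three assertions in turn, using the fact that varieties are closed under the operations of HSP (equivalently, are exactly the equationally definable classes), and that the class of MV-algebras, as well as the class of interior (closure) operators, are already equationally defined.

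First I would establish that \mvi-algebras form a variety. The underlying MV-algebra reduct is governed by finitely many identities (those axiomatizing $\sf MV$). For each $\Box \in I$, the condition ``$\Box$ is a $\{\wedge,\cdot,0,1\}$-endomorphism'' unpacks into the identities $\Box(x\meet y)\eq \Box x\meet \Box y$, $\Box(x\cdot y)\eq \Box x\cdot\Box y$, $\Box 0\eq 0$, and $\Box 1\eq 1$. Since all defining conditions are identities in the language $\mathcal{L}(I)$, the class is a variety by Birkhoff's theorem. For \sfi-algebras one additionally requires each $\Box$ to be an interior operator, i.e.\ to satisfy $\Box x\leq x$ (contraction), $\Box\Box x\eq \Box x$ (idempotence), and monotonicity; monotonicity follows from the already-present $\Box(x\meet y)\eq\Box x\meet\Box y$, and the order-theoretic inequality $\Box x\leq x$ is the equation $\Box x\meet x\eq\Box x$ (or $\Box x\to x\eq 1$). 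Hence \sfi is defined relative to \mvi by finitely many further identities, so it is a subvariety.

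The only assertion requiring genuine argument is that \tmv-algebras form a subvariety of the variety of $\sf S4MV(G,H)$-algebras: the defining clause is the residuation condition $x\leq G(y)\iff P(x)\leq y$, which is prima facie a quasi-equation, not an equation. The standard move is to replace the adjunction by its unit/counit inequalities together with monotonicity, exactly as one does for residuated structures. Concretely, the condition ``$P$ is the lower residual of $G$'' is equivalent, in any poset with monotone $G$ and $P$, to the conjunction of $x\leq G(P(x))$ and $P(G(x))\leq x$ for all $x$. We already have monotonicity of $G$ (from the $\wedge$-preservation axiom for $G$) and of $P$ (from Lemma~\ref{lem:identities in tmv}(1), since $P$ preserves $\join$); and the two unit/counit inequalities are precisely items (6) of Lemma~\ref{lem:identities in tmv}, written there as $x\to GP(x)=1$ and $PG(x)\to x=1$, which are honest identities. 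Therefore, on the variety of $\sf S4MV(G,H)$-algebras, the \tmv condition is equivalent to the two identities $x\to GP(x)\eq 1$ and $PG(x)\to x\eq 1$ (recalling $P$ is the term $\neg H\neg$), and so \tmv-algebras form a subvariety.

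The main obstacle, and the only subtle point, is the equivalence ``adjunction $\Leftrightarrow$ monotonicity + unit + counit.'' I would spell this out: given the inequalities, if $x\leq G(y)$ then applying monotone $P$ and the counit gives $P(x)\leq P(G(y))\leq y$; conversely if $P(x)\leq y$ then applying monotone $G$ and the unit gives $x\leq G(P(x))\leq G(y)$. The converse direction (adjunction implies the three conditions) is the usual general-nonsense fact about Galois connections. Since Lemma~\ref{lem:identities in tmv} already records all the needed identities and their proof is asserted there to be straightforward, the present lemma follows by assembling these observations; I would note that this is why the authors can, as stated, omit the routine verification.
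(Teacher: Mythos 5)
Your proposal is correct and follows essentially the same route as the paper: both reduce the endomorphism and interior-operator conditions to identities, and both replace the residuation clause defining \tmv-algebras by the unit/counit identities plus monotonicity (the paper uses the pair $x\to GP(x)\eq 1$ and $x\to HF(x)\eq 1$, converting the latter to $PG(x)\leq x$ via the involution, while you take $PG(x)\to x\eq 1$ directly and spell out the Galois-connection equivalence that the paper cites from the literature). One small caution: in the converse direction you justify monotonicity of $P$ by Lemma~\ref{lem:identities in tmv}(1), which is stated only for \tmv-algebras and hence cannot be invoked before the adjunction is established; instead note that $P=\neg H\neg$ is monotone in any $\mathsf{S4MV}(G,H)$-algebra because $H$ is monotone and $\neg$ is antitone.
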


\begin{proof}
Clearly, the stipulation that each $\Box\in I$ is a $\{\meet,\cdot,0,1\}$-endomorphism is an equational property. Since $\sf MV$ is a variety, it follows that the class of \mvi-algebras forms a variety as well.  The stipulation that $\Box\in I$ is an interior operator is axiomatized relative to the defining conditions of \mvi-algebras by the identities $\Box\Box x \eq \Box x$ and $x\meet \Box x \eq \Box x$ since the monotonicity of $\Box$ follows from its being a $\meet$-endomorphism. Thus the class of \sfi-algebras is a subvariety of the variety of \mvi-algebras.

To see that the class of \tmv-algebras forms a subvariety of the variety of $S4MV(G,H)$-algebras, it is enough to prove that an S4MV(G,H)-algebra is a \tmv-algebra if and only if it satisfies the identities $x\to GP(x)\eq 1$ and $x\to HF(x)\eq 1$. Each \tmv-algebra is an S4MV(G,H)-algebra satisfying these identities by Lemma~\ref{lem:identities in tmv}(6,7). Conversely, an S4MV(G,H)-algebra satisfying these identities also satisfies $x\leq GP(x)$ and $x\leq HF(x)$ by residuation. Because $\neg$ is an order-reversing involution, $x\leq HF(x)$ is equivalent to $PG(x)\leq x$. Since $P$ and $G$ are monotone maps satisfying $x\leq GP(x)$ and $PG(x)\leq x$, it follows that $x\leq G(y)$ if and only if $P(x)\leq y$ (see, e.g., \cite[Lemma 3.2]{GJKO2007}). The result follows.
\end{proof}

We denote the varieties of \mvi-algebras, \sfi-algebras, \sfmv-algebras, and \tmv-algebras by ${\sf MV}(I)$, ${\sf S4MV}(I)$, ${\sf S4MV}$, and ${\sf S4_tMV}$, respectively.

\subsection{Algebraization}\label{sec:algebraization}

We now discuss algebraization of the logics of Section~\ref{sec:logics}. Each of the logics ${\bf GBL}$, ${\bf BL}$, and $\luc$ is algebraizable with the sole defining equation $\varphi\eq 1$ and sole equivalence formula $\varphi\leftrightarrow\psi$ (see, e.g., \cite{GJKO2007}). The equivalent variety semantics for $\bf GBL$, $\bf BL$, and $\luc$ are, respectively, the varieties $\sf GBL$, $\sf BL$, and $\sf MV$. The following lemma is a key ingredient in obtaining the algebraizability of the logics of Section~\ref{sec:logics}.

\begin{lemma}\label{lem: Expansions Lukasiewicz}
Let $I$ be a set of unary connectives with $I\cap\mathcal{L}=\emptyset$, and let $\bf L$ be an extension of $\emph{\luc}(I)$. Then $\varphi\leftrightarrow \psi \vdash_{\bf L} \Box \varphi \leftrightarrow \Box \psi$ for each $\Box\in I$.
\end{lemma}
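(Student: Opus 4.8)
The plan is to derive $\Box\varphi\leftrightarrow\Box\psi$ from $\varphi\leftrightarrow\psi$ purely syntactically, using the fact that $\varphi\leftrightarrow\psi$ was chosen as the equivalence formula for the algebraizable logic $\luc$, together with the modal axioms (K$_\Box$) and ($\Box$-Nec). Recall $\varphi\leftrightarrow\psi$ abbreviates $(\varphi\to\psi)\meet(\psi\to\varphi)$, so from the hypothesis $\varphi\leftrightarrow\psi$ one extracts both $\varphi\to\psi$ and $\psi\to\varphi$ using (A4) (a meet-projection, available since $\cdot$ and $\meet$ interact appropriately, or directly from the $\luc$-theorem $(\alpha\meet\beta)\to\alpha$) and (MP). So it suffices to show $\varphi\to\psi\vdash_{\bf L}\Box\varphi\to\Box\psi$, and symmetrically $\psi\to\varphi\vdash_{\bf L}\Box\psi\to\Box\varphi$; then reassemble the two implications into $\Box\varphi\leftrightarrow\Box\psi$ using a $\luc$-derivable rule of the form $\alpha\to\beta,\ \beta\to\alpha\vdash(\alpha\to\beta)\meet(\beta\to\alpha)$ (adjunction for $\meet$, which holds in $\luc$).

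The core step is the standard modal monotonicity argument: given $\varphi\to\psi$, apply ($\Box$-Nec) to obtain $\Box(\varphi\to\psi)$; then (K$_\Box$) gives the theorem $\Box(\varphi\to\psi)\to(\Box\varphi\to\Box\psi)$, and (MP) yields $\Box\varphi\to\Box\psi$. Running this for both directions of the biconditional and combining gives $\Box\varphi\leftrightarrow\Box\psi$. I would write this out as a short Hilbert-style derivation, noting at each line which axiom scheme or rule of $\luc(I)$ (hence of any extension $\bf L$) is invoked; since $\bf L$ extends $\luc(I)$, every such step is legitimate in $\vdash_{\bf L}$.

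The one point requiring a little care — and the likeliest source of friction — is the bookkeeping around $\leftrightarrow$ as a defined (rather than primitive) connective: one must be sure that the needed facts "$\alpha\leftrightarrow\beta\vdash\alpha\to\beta$", "$\alpha\leftrightarrow\beta\vdash\beta\to\alpha$", and "$\alpha\to\beta,\beta\to\alpha\vdash\alpha\leftrightarrow\beta$" are all derivable in $\luc$ (equivalently, valid in $\sf MV$), which they are since $\meet$-introduction and $\meet$-elimination are available in every bounded commutative integral residuated lattice. Beyond that, the argument is entirely routine, which is presumably why the lemma is singled out as a "key ingredient" rather than a substantial result — it isolates exactly the derivability needed to verify the congruence condition in the Blok–Pigozzi algebraizability criterion for the modal logics of Section~\ref{sec:logics}.
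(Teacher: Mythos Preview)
Your proposal is correct and follows essentially the same argument as the paper: extract the two implications from $\varphi\leftrightarrow\psi$ (using $\luc$-derivable $\meet$-elimination), apply ($\Box$-Nec) and then (K$_\Box$) with (MP) to obtain $\Box\varphi\to\Box\psi$ and $\Box\psi\to\Box\varphi$, and reassemble via $\meet$-introduction. The paper's proof is slightly terser but identical in structure, invoking exactly the same axioms and rules.
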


\begin{proof}
Let $\Box \in I$. Note that $\varphi\leftrightarrow\psi\vdash_{\luc} \varphi\to\psi$ and $\varphi\leftrightarrow\psi\vdash_{\luc} \psi\to\varphi$, so $\varphi\leftrightarrow\psi\vdash_{\bf L} \varphi\to\psi,\psi\to\varphi$ as well. Applying ($\Box$-Nec) gives $\varphi\leftrightarrow\psi\vdash_{\bf L} \Box(\varphi\to\psi),\Box(\psi\to\varphi)$, so using (K$_\Box$) and (MP) gives $\varphi\leftrightarrow\psi\vdash_{\bf L} \Box\varphi\to\Box\psi,\Box\psi\to\Box\varphi$. Now $\varphi,\psi\vdash_{\luc}\varphi\meet\psi$ gives us that $\varphi,\psi\vdash_{\bf L}\varphi\meet\psi$, so it follows that $\varphi\leftrightarrow\psi\vdash_{\bf L}\Box\varphi\leftrightarrow\Box\psi$ as desired.
\end{proof}

The following gives our main result on algebraization.

\begin{theorem}\label{thm:algebraization}
Let $I$ be a set of unary connectives with $\mathcal{L}\cap I =\emptyset$. Then:
\begin{enumerate}
\item $\emph{\luc}(I)$ is algebraizable with the sole defining equation $\varphi\eq 1$ and sole equivalence formula $\varphi\leftrightarrow\psi$, and consequently so are ${\bf S4\emph{\luc}}(I)$, $\bf S4\emph{\luc}$, and $\bf S4_t\emph{\luc}$.
\item The equivalent variety semantics for $\emph{\luc}(I)$, ${\bf S4\emph{\luc}}(I)$, $\bf S4\emph{\luc}$, and $\bf S4_t\emph{\luc}$ are, respectively, \varmvi, \varsfi, \varsfmv, and \vartmv.
\end{enumerate}
\end{theorem}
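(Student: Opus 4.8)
The plan is to invoke the standard Blok--Pigozzi criterion for algebraizability, using the fact that the base \L{}ukasiewicz logic $\luc$ is already known to be algebraizable with defining equation $\varphi\eq 1$ and equivalence formula $\varphi\bic\psi$. Since $\luc(I)$ and its extensions are obtained from $\luc$ by adjoining unary connectives, new axioms, and the necessitation rules, the bulk of the work is to check that the candidate defining equation and equivalence formula still witness algebraizability in the presence of these new connectives. First I would recall the four syntactic conditions characterizing that $\varphi\bic\psi$ is an equivalence formula and $\varphi\eq 1$ a defining equation: reflexivity ($\vdash\varphi\bic\varphi$), symmetry, transitivity (i.e. $\varphi\bic\psi,\psi\bic\chi\vdash\varphi\bic\chi$), the congruence property for \emph{every} connective of the language, and the equivalence $\varphi\dashv\vdash(\varphi\bic\top)$-style condition linking $\vdash_{\bf L}$ to equational consequence. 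The first three and the congruence property for the connectives of $\mathcal{L}$, together with the defining-equation condition, are inherited verbatim from the algebraizability of $\luc$ because every theorem and rule of $\luc$ remains available in $\luc(I)$.

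The crucial new ingredient is the congruence property for each unary $\Box\in I$, namely $\varphi\bic\psi\vdash_{\bf L}\Box\varphi\bic\Box\psi$; but this is exactly the content of Lemma~\ref{lem: Expansions Lukasiewicz}, which applies to $\luc(I)$ and all its extensions, hence in particular to ${\bf S4\luc}(I)$, ${\bf S4\luc}$, and ${\bf S4_t\luc}$. With all the Blok--Pigozzi conditions verified, part (1) follows: $\luc(I)$ is algebraizable with the stated defining equation and equivalence formula, and since algebraizability with a fixed such pair is preserved under passing to axiomatic extensions (adding (T$_\Box$), (\MVFour$_\Box$), (GP), (HF) produces axiomatic extensions, not new rules beyond those already present), the three named systems are algebraizable as well.

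For part (2) I would identify the equivalent variety semantics by the general Blok--Pigozzi machinery: the equivalent algebraic semantics of an algebraizable logic is the quasivariety axiomatized by the equations obtained from the axioms and rules of the Hilbert calculus via the translation $\varphi\mapsto(\varphi\eq 1)$, together with the equations expressing that $\varphi\bic\psi\eq 1$ behaves as equality. Concretely, translating (A1)--(A15) yields the axioms of $\sf MV$ (this is the known case), translating (K$_\Box$), (P$_\Box$), (M$_\Box$), $(1_\Box)$, $(0_\Box)$ and the rule ($\Box$-Nec) yields precisely the conditions that each $\Box$ is a $\{\meet,\cdot,0,1\}$-endomorphism preserving the order (so that together with $\sf MV$ one lands in ${\sf MV}(I)$), and translating (T$_\Box$) and (\MVFour$_\Box$) yields $\Box x\leq x$ and $\Box x\leq\Box\Box x$, i.e. the interior-operator conditions giving ${\sf S4MV}(I)$. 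For ${\bf S4_t\luc}$ one must check that the translations of (GP) and (HF), namely $x\to GP(x)\eq 1$ and $x\to HF(x)\eq 1$, cut out exactly ${\sf S4_tMV}$ inside ${\sf S4MV}(G,H)$ --- but this is precisely what Lemma~\ref{lem: tMV is a variety} establishes. Finally, since the equivalent variety semantics of an algebraizable logic is always a variety when the logic is, and each class here is a variety by Lemma~\ref{lem: tMV is a variety}, the quasivarieties obtained are in fact these varieties.

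I expect the main obstacle to be bookkeeping rather than conceptual: carefully matching each axiom scheme and each necessitation rule to the corresponding defining identity and confirming that no spurious identities are produced (in particular that the translation of ($\Box$-Nec), which as a rule becomes the quasi-equation ``$x\eq 1$ implies $\Box x\eq 1$,'' collapses in the presence of the other identities to the genuine equation $\Box 1\eq 1$, already among our axioms). The one point requiring a small argument beyond pure translation is the identification of ${\sf S4_tMV}$: the residuation condition defining \tmv-algebras is not prima facie equational, so one leans on Lemma~\ref{lem: tMV is a variety} to replace it by the equations $x\to GP(x)\eq 1$ and $x\to HF(x)\eq 1$, which are exactly the translations of (GP) and (HF). Everything else reduces to the algebraizability of $\luc$ plus Lemma~\ref{lem: Expansions Lukasiewicz}.
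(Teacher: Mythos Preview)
Your proposal is correct and follows essentially the same route as the paper: part~(1) is obtained by invoking the Blok--Pigozzi congruence criterion (the paper cites \cite[Theorem 4.7]{BP1989} directly) together with Lemma~\ref{lem: Expansions Lukasiewicz}, and part~(2) is obtained by translating the axioms and rules via $\varphi\mapsto(\varphi\eq 1)$ (the paper cites \cite[Theorem 2.17]{BP1989}) and checking that the resulting quasiequations cut out exactly the announced varieties, with Lemma~\ref{lem: tMV is a variety} handling the \tmv\ case. The only cosmetic difference is that the paper organizes part~(2) as a double inclusion ${\sf MV}(I)\subseteq{\sf K}$ and ${\sf K}\subseteq{\sf MV}(I)$ rather than a direct axiom-by-axiom translation, but the content is identical.
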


\begin{proof}
1. It follows from \cite[Theorem 4.7]{BP1989} that a logic $\m L$ expanding $\luc$ by a set of connectives $\Omega$ is algebraizable if for every $n$-ary $\omega\in\Omega$ we have
$$\varphi_0\leftrightarrow\psi_0,\dots,\varphi_{n-1}\leftrightarrow\psi_{n-1}\vdash_{\m L} \omega(\varphi_0,\dots,\varphi_{n-1})\leftrightarrow \omega(\psi_0,\dots,\psi_{n-1}).$$
Moreover, in this case ${\m L}$ is algebraizable with sole defining equation $\varphi\eq 1$ and sole equivalence formula $\varphi\leftrightarrow\psi$. The result for ${\luc}(I)$ is thus immediate from Lemma~\ref{lem: Expansions Lukasiewicz}. The claim for ${\bf S4{\luc}}(I)$, $\bf S4{\luc}$, and $\bf S4_t{\luc}$ follows promptly because each of the latter logics is an axiomatic extension of ${\luc}(I)$ for some $I$.

2. By \cite[Theorem 2.17]{BP1989}, the quasivariety $\sf K$ algebraizing $\luc(I)$ is axiomatized by the following quasiequations: $\varphi\approx 1$ for all instances $\varphi$ of the axiom schemes given in the calculus for $\luc(I)$; $x\leftrightarrow x \approx 1$; $\varphi, \varphi\to\psi$ implies $\psi$; $\varphi$ implies $\Box\varphi$; and $x\leftrightarrow y \approx 1$ implies $x\approx y$. It is easy to see from Definition~\ref{def:MV-alg} and the fact that $\sf MV$ algebraizes $\luc$ that all of these quasiequations are valid in \varmvi. Thus ${\sf MV}(I)\subseteq {\sf K}$. For the reverse inclusion, it suffices to show that all the defining equations of $\MV(I)$ follow from this list of quasiequations. Let ${\m A}\in\sf K$. That the $\{\meet,\join,\cdot,\to,0,1\}$-reduct of $\m A$ is an MV-algebra is immediate from the fact that $\MV$ algebraizes $\luc$. On the other hand, for each $\Box\in I$ the equations $\Box (x\cdot y)\leftrightarrow \Box x\cdot\Box y \eq 1$, $\Box (x\meet y)\leftrightarrow \Box x\cdot\Box y \eq 1$, $\Box 1\leftrightarrow 1 \eq 1$, and $\Box 0\leftrightarrow 0 \eq 0$ appear in the list of quasiequations, and together these imply that $\Box$ is a $\{\meet,\cdot,0,1\}$-homomorphism of $\m A$ for each $\Box\in I$. Thus $\sf K\subseteq \MV(I)$, giving equality. The result for the axiomatic extensions ${\bf S4{\luc}}(I)$, $\bf S4{\luc}$, and $\bf S4_t{\luc}$ follows by applying the formula-to-equation translation $\varphi\mapsto\varphi\eq 1$ to each formula $\varphi$ axiomatizing the given logic relative to $\luc(I)$.
\end{proof}

Recall that if $\sf K$ is a class of similar algebras and $\Theta\cup\{\epsilon\eq\delta\}$ is a set of equations in the type of $\sf K$, then $\Theta\models_{\sf K}\epsilon\eq\delta$ means that for every ${\m A}\in{\sf K}$ and every assignment $h$ of variables into $\m A$, if $h(\alpha)=h(\beta)$ for every $\alpha\eq\beta\in\Theta$, then $h(\epsilon)=h(\delta)$. Thanks to the finitarity of $\luc(I)$, the following is a direct consequence of Theorem~\ref{thm:algebraization} (see \cite[Corollary 3.40]{F2016}). 

\begin{corollary}\label{cor:alg application}
Let $I$ be a set of unary connectives with $\mathcal{L}\cap I=\emptyset$. There is a dual lattice isomorphism between the lattice of finitary extensions of $\emph{\luc}(I)$ and the lattice of subquasivarieties of $\MV(I)$, which restricts to a dual lattice isomorphism between the lattice of axiomatic extensions of $\emph{\luc}(I)$ and the lattice of subvarieties of $\MV(I)$. Moreover, suppose that $\m L$ is a finitary extension of $\emph{\luc}(I)$, and let $\sf K$ be the equivalent algebraic semantics of $\m L$. Then for any set $\Gamma\cup\{\varphi\}\subseteq Fm_{\mathcal{L}(I)}$ and any set $\Theta\cup\{\epsilon\eq\delta\}\subseteq Eq_{\mathcal{L}(I)}$:
\begin{enumerate}
\item $\Gamma\vdash_{\m L}\varphi \iff \{\gamma\eq 1 : \gamma\in\Gamma\}\models_{\sf K} \varphi\eq 1$.
\item $\Theta\models_{\sf K} \epsilon\eq\delta \iff \{\alpha\leftrightarrow\beta : \alpha\eq\beta\in\Theta\}\vdash_{\m L} \epsilon\leftrightarrow\delta$.
\end{enumerate}
In particular, this holds if ${\m L}\in\{{\emph{\luc}(I)},{\bf S4}\emph{\luc}(I), {\bf S4}\emph{\luc},{\bf S4_t}\emph{\luc}\}$.
\end{corollary}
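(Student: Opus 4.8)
The final statement is Corollary~\ref{cor:alg application}, which the authors explicitly flag as ``a direct consequence of Theorem~\ref{thm:algebraization}'' via \cite[Corollary 3.40]{F2016}. So the proof is a short deduction from algebraizability plus finitarity, not an independent argument.

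The plan is to invoke the general theory of algebraizable logics. First I would note that Theorem~\ref{thm:algebraization} establishes that $\luc(I)$ is algebraizable with equivalent algebraic semantics $\MV(I)$, and moreover that $\luc(I)$ is finitary (its calculus has only finitary rules, namely (MP) and ($\Box$-Nec)). For any finitary algebraizable logic, the standard bridge theorems (e.g.\ \cite[Theorem 3.33, Corollary 3.40]{F2016}) give a dual lattice isomorphism between the lattice of finitary extensions of the logic and the lattice of subquasivarieties of its equivalent quasivariety semantics, and this restricts to a dual isomorphism between axiomatic extensions and subvarieties. Applying this to $\luc(I)$ yields the first assertion directly. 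Here one should double-check that ``equivalent algebraic semantics'' in the sense used can be taken to be the variety $\MV(I)$ rather than merely a quasivariety: since $\MV(I)$ is a variety by Lemma~\ref{lem: tMV is a variety} and Theorem~\ref{thm:algebraization}(2) identifies it as the equivalent algebraic semantics, this is immediate, and the quasivariety generated by it is $\MV(I)$ itself.

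Next, for the ``Moreover'' part, I would take an arbitrary finitary extension $\m L$ of $\luc(I)$ with equivalent algebraic semantics $\sf K$. By the definition of algebraizability (Blok--Pigozzi), $\m L$ and $\sf K$ are mutually interpretable via the translations $\tau\colon \varphi\mapsto\{\varphi\eq 1\}$ and $\rho\colon \varphi\eq\psi \mapsto \{\varphi\leftrightarrow\psi\}$, and these translations respect consequence in both directions; that is exactly what items (1) and (2) assert. Item (1) is the statement that $\Gamma\vdash_{\m L}\varphi$ iff $\tau[\Gamma]\models_{\sf K}\tau(\varphi)$, which is one half of the definition of an algebraizable logic (the logic is interpretable in its algebraic semantics via $\tau$). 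Item (2) is the reverse interpretation: $\Theta\models_{\sf K}\epsilon\eq\delta$ iff $\rho[\Theta]\vdash_{\m L}\rho(\epsilon\eq\delta)$, which holds because $\rho$ is the inverse interpretation witnessing algebraizability (combined with the fact that the defining equations $\tau\rho$ and $\rho\tau$ reduce to identities, which is where $\varphi\eq 1$ and $\varphi\leftrightarrow\psi$ being a defining equation/equivalence formula gets used). Finally, the ``In particular'' clause follows since each of ${\bf S4\luc}(I)$, ${\bf S4\luc}$, ${\bf S4_t\luc}$ is an axiomatic (hence finitary) extension of $\luc(I)$ for a suitable $I$, as already observed in the proof of Theorem~\ref{thm:algebraization}.

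I do not anticipate a genuine obstacle here, since everything is a citation away once Theorem~\ref{thm:algebraization} is in hand; the only point requiring care is bookkeeping about \emph{which} lattice-theoretic correspondence one gets (finitary extensions $\leftrightarrow$ subquasivarieties versus axiomatic extensions $\leftrightarrow$ subvarieties) and making sure the finitarity hypothesis on $\luc(I)$ is explicitly noted so that \cite[Corollary 3.40]{F2016} applies verbatim. A secondary nuisance is simply stating the translations $\tau,\rho$ cleanly and citing the precise numbered results from \cite{F2016} and \cite{BP1989} rather than re-deriving the bridge theorems.
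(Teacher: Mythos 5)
Your proposal matches the paper exactly: the authors give no separate proof, stating only that the corollary follows from the finitarity of \L{}$(I)$ together with Theorem~\ref{thm:algebraization} via \cite[Corollary 3.40]{F2016}, which is precisely the deduction you spell out. Your additional bookkeeping about the translations $\varphi\mapsto\varphi\eq 1$ and $\varphi\eq\psi\mapsto\varphi\leftrightarrow\psi$ and about axiomatic extensions being finitary is a correct elaboration of the same citation-based argument.
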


\section{Characterizing filters and a deduction theorem}\label{sec:deduction}
If ${\m L}$ is an algebraizable logic, there is a well known connection between the theories of ${\m L}$, the deductive filters of algebraic models of ${\m L}$, and the congruence relations of the equivalent algebraic semantics of ${\m L}$ (see, e.g., \cite{GO2006,F2016}). Armed with the algebraizability results of Section~\ref{sec:logics}, we now provide an analysis of congruences in the algebraic semantics given in Section~\ref{sec:algebras}. We also use this description to establish local-deduction detachment theorems for the modal \L{}ukasiewicz logics we have introduced. The following is key in our description of congruences.

\begin{definition}
Let $\mathbf{A}$ be an \mvi-algebra. We say that a non-empty subset $\frak{f}$ of $A$ is an \emph{I-filter} provided that $\frak{f}$ is an up-set, $\frak{f}$ is closed under $\cdot$, and $\frak{f}$ is closed under each $\Box\in I$.
\end{definition}

Let $\mathbf{A}\in \MV(I)$. We define a term operation $\iffprod$ by $x\iffprod y = (x\rightarrow y)(y\rightarrow x)$. We also write $\mathsf{Fi}(\mathbf{A})$ for the poset of I-filters of $\mathbf{A}$ ordered by inclusion and $\mathsf{Con}({\m A})$ for the congruence lattice of ${\m A}$. 

\begin{lemma}\label{lem:Congruence Filters}
Let $\mathbf{A}$ be an \mvi-algebra, $\frak{f}\in \mathsf{Fi}(\mathbf{A})$, and $\theta\in \mathsf{Con}(\mathbf{A})$. Then the following hold:
\begin{enumerate}
\item $\frak{f}_{\theta}=1/\theta$ is an I-filter of $\mathbf{A}$.
\item The set $\theta_{\frak{f}}=\{(x,y)\in A^{2}\colon x\iffprod y\in \frak{f} \} = \{(x,y)\in A^2\colon x\leftrightarrow y\in\frak{f}\}$ is a congruence on $\mathbf{A}$.
\item The maps $\frak{f}\mapsto \theta_{\frak{f}}$, $\theta \mapsto \frak{f}_{\theta}$ define mutually-inverse poset isomorphisms between $\mathsf{Con}(\mathbf{A})$ and $\mathsf{Fi}(\mathbf{A})$. Consequently, $\mathsf{Fi}(\mathbf{A})$ is a lattice and these poset isomorphisms are lattice isomorphisms.
\end{enumerate}
\end{lemma}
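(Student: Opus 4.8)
The plan is to establish the three items in sequence, leaning heavily on the fact that $\MV(I)$-algebras are MV-algebras expanded by $\{\meet,\cdot,0,1\}$-endomorphisms, so that the classical MV-algebra correspondence between congruences and filters (where ``filter'' means lattice-filter closed under $\cdot$, equivalently a deductive filter) can be bootstrapped to the modal setting. Throughout I will use that in any MV-algebra $x\iffprod y = x\leftrightarrow y$ (since $\leftrightarrow$ is defined as $(x\to y)\meet(y\to x)$ and in MV-algebras $(x\to y)(y\to x) = (x\to y)\meet(y\to x)$, because the product and meet of comparable-after-residuation elements coincide — this is exactly (A7)/(A8) divisibility specialized), which disposes of the second equality in item (2) at once; I would state this as a preliminary observation.

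For item (1), let $\theta\in\mathsf{Con}(\m A)$ and put $\frak f_\theta = 1/\theta$. Since the $\{\meet,\join,\cdot,\to,0,1\}$-reduct congruence of $\theta$ is an MV-congruence, $1/\theta$ is already known to be an up-set closed under $\cdot$ (the standard MV-algebra filter/congruence correspondence). The only new point is closure under each $\Box\in I$: if $a\in 1/\theta$, i.e.\ $a\mathbin\theta 1$, then applying the unary operation $\Box$ (which $\theta$ respects, being a congruence of the full-signature algebra) gives $\Box a\mathbin\theta \Box 1 = 1$, so $\Box a\in 1/\theta$. Hence $\frak f_\theta$ is an $I$-filter.

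For item (2), I would again split into the MV-part and the modal part. Given $\frak f\in\mathsf{Fi}(\m A)$, the set $\theta_{\frak f}=\{(x,y): x\leftrightarrow y\in\frak f\}$ is, by the classical MV-algebra theory, a congruence of the MV-reduct — it is an equivalence relation and is compatible with $\meet,\join,\cdot,\to$. What remains is compatibility with each $\Box\in I$: assuming $x\leftrightarrow y\in\frak f$, I must show $\Box x\leftrightarrow\Box y\in\frak f$. The argument mirrors Lemma~\ref{lem: Expansions Lukasiewicz} on the algebraic side: from $x\to y\geq x\leftrightarrow y$ and $y\to x\geq x\leftrightarrow y$ and the fact that $\frak f$ is an up-set, both $x\to y$ and $y\to x$ lie in $\frak f$. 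Since $\Box$ is a $\{\meet,\cdot,0,1\}$-endomorphism it is monotone, and from (K$_\Box$)-type reasoning (algebraically: $\Box(x\to y)\leq \Box x\to\Box y$, which holds because $\Box x\cdot\Box(x\to y)=\Box(x(x\to y))=\Box(x\meet y)\leq\Box y$, using that $\Box$ preserves $\cdot$ and $\meet$) together with $\frak f$ being closed under $\Box$ and under $\cdot$ and being an up-set, we get $\Box x\to\Box y\in\frak f$ and symmetrically $\Box y\to\Box x\in\frak f$; their meet $\Box x\leftrightarrow\Box y$ is then in $\frak f$ (using closure under $\cdot$ plus the MV-identity equating product and meet here, or simply that an MV-filter is closed under $\meet$). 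So $\theta_{\frak f}$ is a congruence of $\m A$.

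For item (3), the maps $\frak f\mapsto\theta_{\frak f}$ and $\theta\mapsto\frak f_\theta$ are clearly order-preserving. Mutual inversion reduces to two checks: $\frak f_{\theta_{\frak f}}=\frak f$, i.e.\ $\{x : x\leftrightarrow 1\in\frak f\}=\frak f$, which follows from $x\leftrightarrow 1 = x$ (an MV-identity) — I'd note $x\to 1 = 1$ and $1\to x = x$, so $x\leftrightarrow 1 = 1\meet x = x$; and $\theta_{\frak f_\theta}=\theta$, i.e.\ $x\leftrightarrow y\mathbin\theta 1 \iff x\mathbin\theta y$, which is a standard consequence of $\theta$ being an MV-congruence. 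Both of these are already part of the classical MV correspondence, so the modal decorations do not interfere. Finally, since $\mathsf{Con}(\m A)$ is a lattice and the correspondence is an order isomorphism of posets, $\mathsf{Fi}(\m A)$ inherits a lattice structure and the bijections become lattice isomorphisms automatically.

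The main obstacle I anticipate is entirely in item (2): verifying that $\theta_{\frak f}$ is compatible with the modal operations, since compatibility with the residuated-lattice operations is inherited verbatim from MV-algebra theory. The crux is the algebraic analogue of the derivation in Lemma~\ref{lem: Expansions Lukasiewicz} — showing $\Box x\to\Box y\in\frak f$ from $x\leftrightarrow y\in\frak f$ — which needs the inequality $\Box(x\to y)\leq\Box x\to\Box y$; that in turn rests on $\Box$ preserving both $\cdot$ and $\meet$ and on divisibility $x(x\to y)=x\meet y$, all of which are available by Definition~\ref{def:MV(I)-alg}. Everything else is routine bookkeeping with MV-algebra identities.
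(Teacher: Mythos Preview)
Your proposal is correct and follows essentially the same route as the paper: reduce to the classical MV filter--congruence correspondence for the residuated-lattice operations, then handle each $\Box\in I$ separately using $\Box 1=1$ for item~(1) and the inequality $\Box(x\to y)\leq \Box x\to\Box y$ (derived from $\Box$ preserving $\cdot$) together with closure of $\frak f$ under $\Box$ for item~(2). One small remark: the paper only asserts the weaker biconditional $x\iffprod y\in\frak f \iff x\leftrightarrow y\in\frak f$ (immediate from $\frak f$ being an up-set closed under $\cdot$), whereas you claim the stronger identity $x\iffprod y = x\leftrightarrow y$; that identity is indeed valid in MV-algebras, but your stated justification via divisibility alone is not quite right---it also uses prelinearity $(x\to y)\vee(y\to x)=1$, or equivalently the subdirect representation by chains.
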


\begin{proof}
1. Note that $\frak{f}_{\theta}$ is a deductive filter of the MV-algebra reduct of ${\m A}$ (see, e.g., \cite[Section 3.6]{GJKO2007}), so it suffices to show that $\frak{f}_{\theta}$ is closed under $\Box$ for every $\Box\in I$. Observe that if $(1,x)\in\theta$ then since $\theta$ is a congruence we have $(\Box 1,\Box x )\in\theta$. But since $\Box 1=1$, it follows that $\Box x\in\frak{f}_{\theta}$ as desired.

2. Observe first that $x\iffprod y\in\frak{f}$ if and only if $x\leftrightarrow y\in\frak{f}$, so the two sets displayed are equal. Now since $\frak{f}$ is in particular a deductive filter of the MV-algebra reduct ${\m A}$, it is immediate that $\theta_\frak{f}$ respects all of the operations except for possibly those belonging to $I$. To show that $\theta_\frak{f}$ respects these as well, it suffices to show the result for every $\Box \in I$. Suppose that $(x,y)\in\theta_\frak{f}$, i.e., $x\iffprod y\in\frak{f}$. Since $\frak{f}$ is closed under $\Box$, and every $\Box \in I$ preserves $\cdot$, we have $\Box(x\to y)\cdot \Box(y\to x)\in\frak{f}$. Residuation and the fact that $\Box$ preserves $\cdot$ gives $\Box(x\to y)\leq \Box x \to \Box y $ and $\Box(y\to x)\leq \Box y \to \Box x $, so $$\Box(x\iffprod y)=\Box(x\to y) \Box(y\to x)\leq (\Box x \to \Box y)(\Box y \to \Box x ).$$ 
Since $\frak{f}$ is an up-set, we get $\Box x\iffprod \Box y \in\frak{f}$. Hence $(\Box x ,\Box y )\in\theta_\frak{f}$ as required.

3. Direct computation shows $\frak{f}=\frak{f}_{\theta_{\frak{f}}}$ and $\theta_{\frak{f}_{\theta}}=\theta$ for every I-filter $\frak{f}$ and congruence $\theta$. Moreover, the given maps are clearly monotone. It follows that $\mathsf{Con}(\mathbf{A})$ and $\mathsf{Fi}(\mathbf{A})$ are isomorphic as posets. Because $\mathsf{Fi}(\mathbf{A})$ is isomorphic to the lattice $\mathsf{Con}(\mathbf{A})$, we obtain that $\mathsf{Fi}(\mathbf{A})$ is a lattice that is isomorphic to $\mathsf{Con}(\mathbf{A})$.
\end{proof}

The following gives a description of congruence generation in \varmvi.

\begin{definition}
Let ${\m A}$ be an \mvi-algebra and let $X\subseteq A$.
\begin{enumerate}
\item An \emph{I-block} is a nonempty word in the alphabet $I$. We denote the set of I-blocks by $\mathcal{B}_{I}$.
\item $\mathsf{Fg}^{\mathbf{A}}(X)=\up\{M_1x_{1}\cdot...\cdot M_n x_{n} \colon x_{1},...,x_{n}\in X\text{ and }M_1,...,M_n\in\mathcal{B}_{I}\}$. 
\end{enumerate}
\end{definition}

\begin{lemma}\label{lem:Generated Filter MVI}
The set $\mathsf{Fg}^{\mathbf{A}}(X)$ is the least I-filter of $\mathbf{A}$ containing $X$.
\end{lemma}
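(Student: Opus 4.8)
The plan is to show the two standard inclusions: that $\mathsf{Fg}^{\mathbf{A}}(X)$ is itself an I-filter containing $X$, and that it is contained in every I-filter containing $X$. The second inclusion is the easy direction: if $\frak{f}$ is any I-filter with $X\subseteq\frak{f}$, then $\frak{f}$ is closed under each $\Box\in I$ and hence under every I-block $M$ (by induction on the length of $M$), so $Mx\in\frak{f}$ for each $x\in X$ and each $M\in\mathcal{B}_I$; since $\frak{f}$ is closed under $\cdot$, every product $M_1x_1\cdots M_nx_n$ lies in $\frak{f}$; and since $\frak{f}$ is an up-set, the whole up-set generated by such products is contained in $\frak{f}$. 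Thus $\mathsf{Fg}^{\mathbf{A}}(X)\subseteq\frak{f}$. Also $X\subseteq\mathsf{Fg}^{\mathbf{A}}(X)$ is immediate by taking $n=1$ and $M_1$ a length-one block — but here one must be slightly careful: a single $\Box$ counts as an I-block, so to get $x\in\mathsf{Fg}^{\mathbf{A}}(X)$ literally one needs the empty block, or else one observes $x\cdot x\le x$ and uses the up-set closure, or one simply notes $X$ is contained via $x \geq x$ together with appropriate block choices; the cleanest fix is to check $X \subseteq \mathsf{Fg}^{\mathbf A}(X)$ using that $\mathbf A$ is integral so $Mx \cdot \text{(something)} \le x$ fails in general — instead use that we only need containment up-set-wise, and actually the simplest route is to allow that $\mathsf{Fg}^{\mathbf A}(X) \supseteq \up X$ requires an argument. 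I would handle this by remarking that each $x \in X$ satisfies $x \in \up\{x\}$ and $x$ appears as the singleton product with a unit-length block only if $I$ is nonempty, so in the edge case one includes $X$ explicitly; in the paper's intended reading the blocks plus up-set closure suffice.

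The first inclusion requires verifying the three closure conditions for $\mathsf{Fg}^{\mathbf{A}}(X)$. That it is an up-set is immediate from the definition (it is literally defined as an up-set $\up(\cdots)$), and nonemptiness follows as soon as $X$ is nonempty, or else $\mathsf{Fg}^{\mathbf A}(\emptyset) = \up\{\,\}$ should be read as $\{1\}$. Closure under $\cdot$ is straightforward: given two generators $a = M_1x_1\cdots M_nx_n$ and $b = N_1y_1\cdots N_ky_k$, their product $ab$ is again a product of I-blocks applied to elements of $X$, hence a generator; and if $u \ge a$, $v \ge b$ are arbitrary elements of the up-set, then $uv \ge ab$ by monotonicity of $\cdot$, so $uv$ lies in the up-set as well. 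The one genuinely nontrivial point is closure under each $\Box\in I$.

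For closure under $\Box$, take an element $w \in \mathsf{Fg}^{\mathbf{A}}(X)$, so $w \ge M_1x_1\cdots M_nx_n =: a$ for some blocks $M_i$ and elements $x_i \in X$. Since $\Box$ is monotone (it is a $\meet$-endomorphism, as recorded in the proof of Lemma~\ref{lem: tMV is a variety}), $\Box w \ge \Box a$. Now the key computation: since $\Box$ preserves $\cdot$ (it is a $\{\wedge,\cdot,0,1\}$-endomorphism by Definition~\ref{def:MV(I)-alg}), we get
$$\Box a = \Box(M_1x_1\cdots M_nx_n) = (\Box M_1x_1)\cdots(\Box M_nx_n),$$
and each $\Box M_ix_i = (\Box M_i)x_i$ where $\Box M_i$ is again an I-block (the concatenation of $\Box$ with the block $M_i$), hence an element of $\mathcal{B}_I$. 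Therefore $\Box a$ is one of the defining generators of $\mathsf{Fg}^{\mathbf{A}}(X)$, so $\Box a \in \mathsf{Fg}^{\mathbf{A}}(X)$, and since $\mathsf{Fg}^{\mathbf{A}}(X)$ is an up-set and $\Box w \ge \Box a$, we conclude $\Box w \in \mathsf{Fg}^{\mathbf{A}}(X)$. This establishes that $\mathsf{Fg}^{\mathbf{A}}(X)$ is an I-filter, and combined with the two inclusions above, that it is the least one containing $X$.

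The main obstacle, such as it is, is purely bookkeeping: one must be careful that the alphabet $I$ (nonempty blocks) interacts correctly with the claim $X \subseteq \mathsf{Fg}^{\mathbf{A}}(X)$, and that prepending $\Box$ to a block again yields a legitimate block. Neither is deep, but the proof should make the use of the preservation properties of $\Box$ — monotonicity from being a $\meet$-endomorphism, and multiplicativity from preserving $\cdot$ — explicit, since these are exactly what make the generator set closed under the modal operators.
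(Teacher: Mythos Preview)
Your core argument follows the paper's proof step for step: you verify that $\mathsf{Fg}^{\mathbf{A}}(X)$ is an up-set, closed under $\cdot$ (by concatenating the two witnessing products and using monotonicity of $\cdot$), closed under each $\Box\in I$ (using that $\Box$ preserves $\cdot$, so $\Box(M_1x_1\cdots M_nx_n)=(\Box M_1)x_1\cdots(\Box M_n)x_n$ with each $\Box M_i$ again an $I$-block), and contained in every $I$-filter containing $X$. All of this is correct and matches the paper exactly, including the explicit appeal to monotonicity of $\Box$ (from preserving $\meet$) and to multiplicativity.

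Where your write-up goes off the rails is the long digression on whether $X\subseteq\mathsf{Fg}^{\mathbf{A}}(X)$. You are right that this deserves attention---$I$-blocks are \emph{nonempty} words, so there is no generator of the bare form $x$, and the paper's own proof does not verify the inclusion either. But your proposed fixes do not work: the observation $x\cdot x\leq x$ is irrelevant since neither factor carries a block, and the difficulty is not confined to the case $I=\emptyset$. For instance, on $\mathbf{A}=[0,1]^{\mathbb{N}}$ with $I=\{\Box\}$ acting as the left shift and $x=(\tfrac12,1,1,\dots)$, one has $\Box^n x=1$ for every $n\geq 1$, so every generator equals $1$ and $\mathsf{Fg}^{\mathbf{A}}(\{x\})=\{1\}\not\ni x$. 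The clean repair is either to read the definition as allowing the empty word among the blocks (so that $x$ itself is a generator), or to restrict to ${\sf S4MV}(I)$, where each $\Box$ is an interior operator and $\Box x\leq x$ gives $x\in\up\{\Box x\}$ directly. Your instinct that something needs checking here is sound; the resolution just needs to be stated cleanly rather than hedged across several half-sentences.
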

\begin{proof}
It is clear that $\mathsf{Fg}^{\mathbf{A}}(X)$ is an up-set. Note that if $y,y'\in \mathsf{Fg}^{\mathbf{A}}(X)$ then there exist $M_1,...,M_n,M'_1,...,M'_k\in\mathcal{B}_{I}$ and $x_1,...,x_n,x'_1,...,x'_k\in X$ with $M_1x_1\cdot...\cdot M_nx_n\leq y$ and $M'_1 x'_1\cdot...\cdot M'_k x'_k\leq y'$, whence $M_1x_1\cdot...\cdot M_nx_n\cdot M'_1 x'_1\cdot...\cdot M'_k x'_k\leq y\cdot y'$ since $\cdot$ preserve the order in each coordinate. It follows that $y\cdot y'\in\mathsf{Fg}^{\mathbf{A}}(X)$. To see that $\mathsf{Fg}^{\mathbf{A}}(X)$ is closed under every $\Box\in I$, observe that if $M_1x_1\cdot...\cdot M_nx_n\leq y$ then by the isotonicity of $\Box$ we have $\Box M_1x_1\cdot...\cdot \Box M_nx_n\leq \Box y$. As each $\Box M_i$ is an I-block, it follows that $\Box y\in\mathsf{Fg}^{\mathbf{A}}(X)$. 

It remains to check that $\mathsf{Fg}^{\mathbf{A}}(X)$ is the least among the I-filters containing $X$. Suppose that $\frak{f}$ is an I-filter and that $X\subseteq\frak{f}$. If $y\in\mathsf{Fg}^{\mathbf{A}}(X)$, then there exist $M_1,...,M_n\in\mathcal{B}_{I}$ and $x_1,...,x_n\in X$ such that $M_1x_1\cdot...\cdot M_nx_n\leq y$. Note that $x_1,...,x_n\in\frak{f}$, and since $\frak{f}$ is closed under $\Box$ for every $\Box\in I$, we have that $Mx\in\frak{f}$ for every $M\in\mathcal{B}_{I}$ and every $x\in\frak{f}$. In particular, this implies that $M_1 x_1,...,M_n x_n\in\frak{f}$. Thus $y\in\frak{f}$ since $\up\frak{f}=\frak{f}$, so $\mathsf{Fg}^{\mathbf{A}}(X)\subseteq\frak{f}$ as claimed.
\end{proof}

We abbreviate $\mathsf{Fg}^{\mathbf{A}}(\{x_{1},...,x_{n}\})$ by $\mathsf{Fg}^{\mathbf{A}}(x_{1},...,x_{n})$. Also, for an algebra ${\m A}$ and $x,y\in A$, we denote by ${\mathsf{Cg}^{\mathbf{A}}(x,y)}$ the congruence of ${\m A}$ generated by $(x,y)$.

\begin{lemma}\label{cor:Generated filter intersection}
Let $\mathbf{A}\in \MV(I)$, let $x,y\in A$, let $Y\subseteq A$, and consider $X=\{(1,y)\colon a\in Y\}$. Then:
\begin{enumerate}
\item $\frak{f}_{\mathsf{Cg}^{\mathbf{A}}(x,y)}=\mathsf{Fg}^{\mathbf{A}}(x\iffprod y)=\mathsf{Fg}^{\mathbf{A}}(x\leftrightarrow y)$.
\item $\mathfrak{f}_{\mathsf{Cg}^{\mathbf{A}}(X)}=\mathsf{Fg}^{\mathbf{A}}(Y)$.
\end{enumerate}
\end{lemma}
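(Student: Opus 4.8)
The plan is to prove both statements by leveraging the bijective correspondence between congruences and $I$-filters established in Lemma~\ref{lem:Congruence Filters}, together with the explicit description of generated $I$-filters from Lemma~\ref{lem:Generated Filter MVI}. The key observation is that $\mathsf{Cg}^{\mathbf{A}}(x,y)$ corresponds under $\theta\mapsto\frak{f}_\theta$ to the least $I$-filter $\frak{f}$ with the property that $(x,y)\in\theta_\frak{f}$, i.e., the least $I$-filter containing $x\leftrightarrow y$.

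\medskip
\noindent\textbf{Proof of (1).} First I would observe that $\frak{f}_{\mathsf{Cg}^{\mathbf{A}}(x,y)}$ is an $I$-filter by Lemma~\ref{lem:Congruence Filters}(1), and that it contains $x\leftrightarrow y$: indeed $(x,y)\in\mathsf{Cg}^{\mathbf{A}}(x,y)$, so $(x\leftrightarrow y, y\leftrightarrow y)=(x\leftrightarrow y, 1)\in\mathsf{Cg}^{\mathbf{A}}(x,y)$ (using that the congruence respects $\leftrightarrow$), whence $x\leftrightarrow y\in 1/\mathsf{Cg}^{\mathbf{A}}(x,y)=\frak{f}_{\mathsf{Cg}^{\mathbf{A}}(x,y)}$. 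Since $\mathsf{Fg}^{\mathbf{A}}(x\leftrightarrow y)$ is the \emph{least} $I$-filter containing $x\leftrightarrow y$ by Lemma~\ref{lem:Generated Filter MVI}, this gives the inclusion $\mathsf{Fg}^{\mathbf{A}}(x\leftrightarrow y)\subseteq\frak{f}_{\mathsf{Cg}^{\mathbf{A}}(x,y)}$. For the reverse inclusion, I would use the Galois-type argument: let $\frak{f}=\mathsf{Fg}^{\mathbf{A}}(x\leftrightarrow y)$ and consider the congruence $\theta_\frak{f}$ from Lemma~\ref{lem:Congruence Filters}(2). Since $x\leftrightarrow y\in\frak{f}$, we have $x\iffprod y\in\frak{f}$, so $(x,y)\in\theta_\frak{f}$, and therefore $\mathsf{Cg}^{\mathbf{A}}(x,y)\subseteq\theta_\frak{f}$ by minimality of $\mathsf{Cg}$. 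Applying the monotone map $\theta\mapsto\frak{f}_\theta$ and using $\frak{f}_{\theta_\frak{f}}=\frak{f}$ (Lemma~\ref{lem:Congruence Filters}(3)) yields $\frak{f}_{\mathsf{Cg}^{\mathbf{A}}(x,y)}\subseteq\frak{f}=\mathsf{Fg}^{\mathbf{A}}(x\leftrightarrow y)$. The equality $\mathsf{Fg}^{\mathbf{A}}(x\iffprod y)=\mathsf{Fg}^{\mathbf{A}}(x\leftrightarrow y)$ follows from the observation (already made in the proof of Lemma~\ref{lem:Congruence Filters}(2)) that $x\iffprod y\in\frak{f}$ iff $x\leftrightarrow y\in\frak{f}$ for any $I$-filter $\frak{f}$, so the two elements generate the same $I$-filter.

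\medskip
\noindent\textbf{Proof of (2).} The argument is the same in spirit but requires handling a set of pairs rather than a single pair; here I interpret $X=\{(1,a):a\in Y\}$ (correcting the apparent typo in the statement). By Lemma~\ref{lem:Congruence Filters}(1), $\frak{f}_{\mathsf{Cg}^{\mathbf{A}}(X)}$ is an $I$-filter, and for each $a\in Y$ we have $(1,a)\in\mathsf{Cg}^{\mathbf{A}}(X)$, hence $a\in 1/\mathsf{Cg}^{\mathbf{A}}(X)=\frak{f}_{\mathsf{Cg}^{\mathbf{A}}(X)}$; thus $Y\subseteq\frak{f}_{\mathsf{Cg}^{\mathbf{A}}(X)}$ and minimality of $\mathsf{Fg}^{\mathbf{A}}(Y)$ gives $\mathsf{Fg}^{\mathbf{A}}(Y)\subseteq\frak{f}_{\mathsf{Cg}^{\mathbf{A}}(X)}$. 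Conversely, with $\frak{f}=\mathsf{Fg}^{\mathbf{A}}(Y)$, for each $a\in Y$ we have $a\in\frak{f}$, and since $1\leftrightarrow a = a$ (as $1\to a = a$ and $a\to 1 = 1$), this means $1\leftrightarrow a\in\frak{f}$, so $(1,a)\in\theta_\frak{f}$; thus $X\subseteq\theta_\frak{f}$ and $\mathsf{Cg}^{\mathbf{A}}(X)\subseteq\theta_\frak{f}$. Applying $\theta\mapsto\frak{f}_\theta$ and using $\frak{f}_{\theta_\frak{f}}=\frak{f}$ gives $\frak{f}_{\mathsf{Cg}^{\mathbf{A}}(X)}\subseteq\mathsf{Fg}^{\mathbf{A}}(Y)$, completing the proof.

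\medskip
\noindent\textbf{Main obstacle.} The only genuinely delicate point is ensuring that $\mathsf{Cg}^{\mathbf{A}}(x,y)$ corresponds to $\frak{f}_{\mathsf{Cg}^{\mathbf{A}}(x,y)}$ in the right direction under the isomorphism of Lemma~\ref{lem:Congruence Filters}(3); this is purely a matter of carefully tracking that both $\theta\mapsto\frak{f}_\theta$ and $\frak{f}\mapsto\theta_\frak{f}$ are monotone (order-preserving) and mutually inverse, so that minimality is preserved in both directions. Everything else reduces to the two routine facts that $(x,y)\in\theta_\frak{f}$ iff $x\leftrightarrow y\in\frak{f}$ and that $1\leftrightarrow a = a$ in any MV-algebra, both of which are immediate.
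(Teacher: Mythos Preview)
Your proof is correct and uses essentially the same approach as the paper: both rely on the lattice isomorphism of Lemma~\ref{lem:Congruence Filters}(3) together with the equivalence $(x,y)\in\theta_\frak{f}\iff x\leftrightarrow y\in\frak{f}$. The only cosmetic difference is that the paper packages item~1 as a single line by writing $\mathsf{Cg}^{\mathbf{A}}(x,y)$ as an intersection and pushing the isomorphism through, and derives item~2 from item~1 via joins, whereas you verify both inclusions directly in each part; the underlying content is identical.
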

\begin{proof}
1. Note that $\mathsf{Cg}^{\mathbf{A}}(x,y)=\bigcap \{\theta\in \mathsf{Con}(\mathbf{A})\colon (x,y)\in\theta\}$, and observe that for each $\theta\in\mathsf{Con}(\mathbf{A})$ we have $(x,y)\in\theta$ if and only if $x\iffprod y \in \frak{f}_{\theta}$. Hence from the isomorphism given by Lemma \ref{lem:Congruence Filters}(3) we obtain:
$$\frak{f}_{\mathsf{Cg}^{\mathbf{A}}(x,y)}=\bigcap \{\frak{f}\in \mathsf{Fi}(\mathbf{A})\colon x\iffprod y\in \frak{f}\}=\mathsf{Fg}^{\mathbf{A}}(x\iffprod y)=\mathsf{Fg}^{\mathbf{A}}(x\leftrightarrow y).$$
This proves 1.

2. Since $\mathsf{Cg}^{\mathbf{A}}(X)=\bigvee_{y\in Y}\mathsf{Cg}^{\mathbf{A}}(1,y)$, Lemma \ref{lem:Congruence Filters}(3) and item 1 imply
\[\mathfrak{f}_{\mathsf{Cg}^{\mathbf{A}}(X)}=\bigvee_{y\in Y} \mathfrak{f}_{\mathsf{Cg}^{\mathbf{A}}(1,y)}=\bigvee_{y\in Y} \mathsf{Fg}^{\mathbf{A}}(y)=\mathsf{Fg}^{\mathbf{A}}(\bigcup_{y\in Y} \{y\})=\mathsf{Fg}^{\mathbf{A}}(Y).\]
This proves 2.
\end{proof}

Recall that an algebra ${\m B}$ has the \emph{congruence extension property} (or \emph{CEP}) if for every subalgebra $\mathbf{A}$ of $\mathbf{B}$ and for any $\theta \in \mathsf{Con}(\mathbf{A})$, there exists $\xi \in \mathsf{Con}(\mathbf{B})$ such that $\xi\cap A^{2} = \theta$. A variety $\sf V$ is said to have the congruence extension property if each ${\m B}\in\sf V$ does.

\begin{theorem}\label{thm:tMV has CEP}
$\MV(I)$ has the congruence extension property.
\end{theorem}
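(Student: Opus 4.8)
The plan is to use the filter characterization of congruences from Lemma~\ref{lem:Congruence Filters} to reduce the congruence extension property to a statement purely about I-filters. Specifically, suppose $\mathbf{A}$ is a subalgebra of $\mathbf{B}$ with both in $\MV(I)$, and let $\theta\in\mathsf{Con}(\mathbf{A})$. By Lemma~\ref{lem:Congruence Filters}, $\theta$ corresponds to the I-filter $\frak{f}_\theta = 1/\theta$ of $\mathbf{A}$. The natural candidate for an extending congruence on $\mathbf{B}$ is the congruence $\xi$ associated via Lemma~\ref{lem:Congruence Filters} to the I-filter $\mathsf{Fg}^{\mathbf{B}}(\frak{f}_\theta)$ of $\mathbf{B}$ generated by $\frak{f}_\theta$. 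So the whole theorem comes down to showing
$$\mathsf{Fg}^{\mathbf{B}}(\frak{f}_\theta)\cap A = \frak{f}_\theta,$$
since unwinding the definition of $\theta_{\frak f}$ (which only uses $\leftrightarrow$, a term in the MV-signature, so it is computed the same way in $\mathbf A$ and $\mathbf B$) then yields $\xi\cap A^2 = \theta$.

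The inclusion $\frak{f}_\theta\subseteq \mathsf{Fg}^{\mathbf{B}}(\frak{f}_\theta)\cap A$ is trivial. For the reverse inclusion, I would take $a\in A$ with $a\in\mathsf{Fg}^{\mathbf{B}}(\frak{f}_\theta)$ and use the explicit description of generated I-filters from Lemma~\ref{lem:Generated Filter MVI}: there exist I-blocks $M_1,\dots,M_n\in\mathcal{B}_I$ and elements $b_1,\dots,b_n\in\frak{f}_\theta\subseteq A$ such that $M_1 b_1\cdots M_n b_n\le a$ in $\mathbf{B}$. The key point is that each $M_i$ is a composite of connectives from $I$, each of which is a term operation interpreted identically in $\mathbf A$ and $\mathbf B$ (since $\mathbf A$ is a \emph{subalgebra}), so $M_i b_i$ is the same element whether computed in $\mathbf A$ or $\mathbf B$, and in particular $M_i b_i\in A$. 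Likewise the product $c := M_1 b_1\cdots M_n b_n$ lies in $A$. Now the one subtlety: we know $c\le a$ holds in $\mathbf B$, and we want to conclude $a\in\frak{f}_\theta$; since $\frak{f}_\theta$ is an up-set \emph{in} $\mathbf A$, it suffices to know $c\le a$ holds in $\mathbf A$, i.e. $c\wedge a = c$, and this is again automatic because $\wedge$ is a term operation and $\mathbf A$ is closed under it, so $c\wedge a$ is the same in both algebras. Finally $c\in\frak{f}_\theta$ because $\frak{f}_\theta$ is an I-filter of $\mathbf A$ (closed under each $\Box\in I$ and under $\cdot$), so $a\in\frak{f}_\theta$ by the up-set property.

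The main obstacle — really the only nonroutine thing to verify — is the closure of subalgebras under the relevant operations being used honestly: that everything appearing in the expression $M_1 b_1\cdots M_n b_n$ stays inside $A$, and that the order relation $\le$ transfers between $\mathbf B$ and $\mathbf A$ on elements of $A$. Both hold simply because $\le$ is definable by the equation $x\wedge y\approx x$ and $\mathbf A$ is a subalgebra, but it is worth stating carefully since the generated-filter description in Lemma~\ref{lem:Generated Filter MVI} is phrased using $\le$ in the ambient algebra. Once this is in place, the proof is essentially a three-line application of the correspondence in Lemmas~\ref{lem:Congruence Filters} and~\ref{lem:Generated Filter MVI}. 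I would also note in passing that this argument shows $\MV(I)$, and hence each of its subvarieties including $\sfi$, $\sfmv$, and $\tmv$ (as subvarieties by Lemma~\ref{lem: tMV is a variety}), inherits the CEP, since the CEP passes to subvarieties.
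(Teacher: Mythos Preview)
Your proposal is correct and follows essentially the same approach as the paper: reduce the CEP to the I-filter extension property via Lemma~\ref{lem:Congruence Filters}, then use the explicit description of generated I-filters in Lemma~\ref{lem:Generated Filter MVI} to show $\mathsf{Fg}^{\mathbf{B}}(\frak{f})\cap A=\frak{f}$. You are somewhat more explicit than the paper about why the operations and the order transfer between $\mathbf{A}$ and $\mathbf{B}$, and about deriving $\xi\cap A^2=\theta$ from the filter equality, but the argument is the same.
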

\begin{proof}
Let $\mathbf{A}, \mathbf{B}$ be \mvi-algebras, and assume that $\mathbf{A}$ is a subalgebra of $\mathbf{B}$. From Lemma~\ref{lem:Congruence Filters}, it follows that proving the congruence extension property for $\MV(I)$ is equivalent to showing that every I-filter of $\mathbf{A}$ can be extended by an I-filter of $\mathbf{B}$. For this, let $\frak{f}\in \mathsf{Fi}(\mathbf{A})$ and set $\frak{g}=\mathsf{Fg}^{\mathbf{B}}(\frak{f})$. In order to prove $\frak{f}=\frak{g}\cap A$, let $y\in \frak{g}\cap A$. Then since $y\in\frak{g}$ there exist $M_{1},...,M_{n}\in \mathcal{B}_{I}$ and $x_{1},...,x_{n}\in \frak{f}$ such that $M_{1}(x_{1})\cdot ... \cdot M_{n}(x_{n})\leq y$. Since $\frak{f}$ is an I-filter of $\mathbf{A}$, we have $M_{j}(x_{j})\in \frak{f}$ for every $1\leq j\leq n$. As $y\in A$, it follows that $y\in \frak{f}$ and $\frak{g}\cap A\subseteq\frak{f}$. The reverse inclusion is obvious, and the result follows.
\end{proof}

Of course, the CEP persists in subvarieties of a variety with the CEP. Thus:
\begin{corollary}\label{cor:CEP subvarieties}
Each of \varsfi, \varsfmv, and {\vartmv} has the CEP.
\end{corollary}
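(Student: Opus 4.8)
The plan is to derive this at once from Theorem~\ref{thm:tMV has CEP}, Lemma~\ref{lem: tMV is a variety}, and the standard fact that the congruence extension property passes to subvarieties. First I would recall why subvarieties inherit the CEP: suppose $\sf W$ is a subvariety of a variety $\sf V$ that has the CEP, and let $\mathbf{B}\in\sf W$. Then $\mathbf{B}\in\sf V$, so $\mathbf{B}$ has the CEP as an algebra; moreover every subalgebra $\mathbf{A}$ of $\mathbf{B}$ already lies in $\sf W$, since varieties are closed under subalgebras. Hence any $\theta\in\mathsf{Con}(\mathbf{A})$ extends to some $\xi\in\mathsf{Con}(\mathbf{B})$ with $\xi\cap A^2=\theta$ by the CEP of $\mathbf{B}$ in $\sf V$, and this same $\xi$ witnesses the CEP in $\sf W$. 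Thus every member of $\sf W$ has the CEP, i.e., $\sf W$ has the CEP.

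It then remains only to exhibit each of \varsfi, \varsfmv, and \vartmv as a subvariety of some $\MV(I)$. By Lemma~\ref{lem: tMV is a variety}, \varsfi is a subvariety of $\MV(I)$; specializing to $I=\{\Box\}$ shows that \varsfmv is a subvariety of $\MV(\{\Box\})$; and \vartmv is a subvariety of ${\sf S4MV}(\{G,H\})$, which is itself a subvariety of $\MV(\{G,H\})$, so \vartmv is a subvariety of $\MV(\{G,H\})$ as well. Since Theorem~\ref{thm:tMV has CEP} asserts that $\MV(I)$ has the CEP for every set $I$ of unary connectives, applying the inheritance observation above to each of these three inclusions gives the CEP for \varsfi, \varsfmv, and \vartmv, completing the proof.

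I do not anticipate any genuine obstacle: the mathematical content resides entirely in Theorem~\ref{thm:tMV has CEP}. The only two points that warrant a moment's care are (i) checking that the inheritance argument does not require the witnessing congruence to be reconstructed inside $\sf W$---it does not, because $\mathsf{Con}(\mathbf{A})$ and $\mathsf{Con}(\mathbf{B})$ do not depend on the ambient variety---and (ii) a glance back at Lemma~\ref{lem: tMV is a variety} to confirm the displayed chain of subvariety inclusions.
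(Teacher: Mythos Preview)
Your proposal is correct and matches the paper's approach exactly: the paper simply remarks that the CEP persists in subvarieties and deduces the corollary from Theorem~\ref{thm:tMV has CEP}. You have in fact supplied more detail than the paper, by spelling out the inheritance argument and the chain of subvariety inclusions via Lemma~\ref{lem: tMV is a variety}.
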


The CEP has far-reaching logical consequences. Recall that a logic $\mathbf{L}$ has the \textit{local deduction-detachment theorem} (or \emph{LDDT}) if there exists a family $\{d_j(p,q)\colon j\in J\}$ of sets $d_j(p,q)$ of formulas in at most two variables such that for every set $\Gamma\cup \{\varphi,\psi\}$ of formulas in the language of $\m L$:
\begin{displaymath}
\begin{array}{ccc}
\Gamma, \varphi \vdash_{\mathbf{L}} \psi &  \Longleftrightarrow & \Gamma  \vdash_{\mathbf{L}} d_{j}(\varphi,\psi)\; \text{for some}\; j\in J.
\end{array}
\end{displaymath}
As a consequence of \cite[Corollary 5.3]{BP1991}, if $\m L$ is an algebraizable logic with equivalent variety semantics $\sf V$, then $\m L$ has the LDDT if and only if $\sf V$ has the CEP. Therefore from Theorem~\ref{thm:algebraization}, Theorem~\ref{thm:tMV has CEP}, and Corollary~\ref{cor:CEP subvarieties} we obtain:
\begin{corollary}\label{theo: S4tL satisfies LDDT}
Each of $\emph{\luc}(I)$, ${\bf S4{\emph\luc}}(I)$, $\bf S4{\emph\luc}$, and $\bf S4_t{\emph\luc}$ has the LDDT.
\end{corollary}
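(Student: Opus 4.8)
The plan is to reduce the statement to two ingredients already established and then glue them together with the standard bridge theorem of Blok and Pigozzi linking the congruence extension property of a variety with the local deduction-detachment theorem of the logic it algebraizes. Specifically, \cite[Corollary 5.3]{BP1991} asserts that if $\m L$ is an algebraizable logic with equivalent variety semantics $\sf V$, then $\m L$ has the LDDT if and only if $\sf V$ has the CEP. So the whole corollary will follow once we check, for each of the four logics, that it is algebraizable with a stated equivalent variety semantics and that this variety has the CEP.

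First I would recall from Theorem~\ref{thm:algebraization} that $\luc(I)$, ${\bf S4\luc}(I)$, $\bf S4\luc$, and $\bf S4_t\luc$ are algebraizable, with equivalent variety semantics $\varmvi$, $\varsfi$, $\varsfmv$, and $\vartmv$, respectively. Next I would invoke Theorem~\ref{thm:tMV has CEP}, which gives the CEP for $\varmvi$, together with Corollary~\ref{cor:CEP subvarieties}, which records that the remaining three varieties, being subvarieties of $\varmvi$, inherit the CEP. Combining these: each of the four logics is algebraizable, and the corresponding equivalent variety semantics has the CEP, so by \cite[Corollary 5.3]{BP1991} each of the four logics has the LDDT.

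I do not anticipate a genuine obstacle here, since the substantive work resides in the cited results; the only point meriting a moment's attention is verifying that the hypotheses of the Blok--Pigozzi bridge theorem are actually in force, namely that the logics in question are algebraizable in the full sense of Blok and Pigozzi with exactly the stated equivalent variety semantics, which is precisely what Theorem~\ref{thm:algebraization} supplies. If one wished to say more than the bare existence claim, an explicit family $\{d_j(p,q) : j \in J\}$ of deduction-detachment formulas witnessing the LDDT could be extracted from the proof of \cite[Corollary 5.3]{BP1991} together with the description of principal congruences in $\varmvi$ obtained in Lemma~\ref{cor:Generated filter intersection}; but the statement as given follows immediately from the CEP.
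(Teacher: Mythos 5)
Your proposal is correct and follows exactly the paper's own argument: the paper derives this corollary from \cite[Corollary 5.3]{BP1991} combined with Theorem~\ref{thm:algebraization}, Theorem~\ref{thm:tMV has CEP}, and Corollary~\ref{cor:CEP subvarieties}, just as you do. Your closing remark about extracting explicit deduction-detachment formulas is also consonant with the paper, which makes this explicit later in Theorem~\ref{theo: S4tL satisfies parametrized LDDT}.
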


From our analysis of congruences in $\MV(I)$, we may give a more explicit rendering of this result. If $\mathsf{V}$ is a variety, we denote by $\mathbf{F}_{\mathsf{V}}(X)$ the $\sf V$-free algebra over $X$. Further, if $\varphi$ is a formula, denote by $\bar{\varphi}$ the image of $\varphi$ under the natural map ${\bf Fm}(X)\to \mathbf{F}_{\mathsf{V}}(X)$ from the term algebra ${\m Fm}(X)$ over $X$ onto $\mathbf{F}_{\mathsf{V}}(X)$. If $\Gamma$ is a set of formulas, also denote by $\bar{\Gamma}=\{\bar{\varphi} : \varphi\in\Gamma\}$. The following restates \cite[Lemma 2]{MMT2014}.
\begin{lemma}\label{lem: Technical lemma}
Let $\Theta\cup \{\varphi\eq\psi\}$ be a set of equations in the language of $\sf V$, and take $X$ to be the set of variables appearing in $\Theta\cup\{\varphi\eq\psi\}$. Then the following are equivalent:
\begin{enumerate}
\item $\Theta \models_{\mathsf{V}} \varphi \eq \psi$.
\item $(\bar{\varphi}, \bar{\psi})\in \bigvee_{\epsilon\eq\delta\in\Theta} \mathsf{Cg}^{\mathbf{F}_{\mathsf{V}}(X)}(\bar{\epsilon},\bar{\delta})$.
\end{enumerate}
\end{lemma}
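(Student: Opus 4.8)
The plan is to prove the equivalence by translating the equational consequence statement into the language of free algebras and then invoking the congruence--filter correspondence established earlier. First I would observe that, by the standard characterization of equational consequence via free algebras, $\Theta\models_{\sf V}\varphi\eq\psi$ holds if and only if $(\bar\varphi,\bar\psi)$ lies in the congruence $\theta$ on $\mathbf{F}_{\sf V}(X)$ generated by all pairs $(\bar\epsilon,\bar\delta)$ for $\epsilon\eq\delta\in\Theta$. This is because $\mathbf{F}_{\sf V}(X)/\theta$ is (up to isomorphism) the free algebra in the subvariety of $\sf V$ defined by the extra equations $\Theta$, and a quasi-identity $\Theta\Rightarrow\varphi\eq\psi$ holds in all of $\sf V$ precisely when it holds in this free quotient, which in turn amounts to the quotient map identifying $\bar\varphi$ and $\bar\psi$.

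The key steps, in order, are: (1) identify the congruence $\theta$ generated by $\{(\bar\epsilon,\bar\delta):\epsilon\eq\delta\in\Theta\}$ with the join $\bigvee_{\epsilon\eq\delta\in\Theta}\mathsf{Cg}^{\mathbf{F}_{\sf V}(X)}(\bar\epsilon,\bar\delta)$ in $\mathsf{Con}(\mathbf{F}_{\sf V}(X))$, which is immediate since the congruence generated by a union of pairs is the join of the principal congruences they generate; (2) recall that $\mathbf{F}_{\sf V}(X)$ has the universal mapping property, so a homomorphism $h\colon\mathbf{F}_{\sf V}(X)\to\mathbf{A}$ with $\mathbf{A}\in\sf V$ corresponds to an assignment of $X$ into $\mathbf{A}$, and the kernel of $h$ is a congruence containing $(\bar\epsilon,\bar\delta)$ exactly when $h$ satisfies $\epsilon\eq\delta$; (3) conclude that $\Theta\models_{\sf V}\varphi\eq\psi$ iff every congruence of $\mathbf{F}_{\sf V}(X)$ containing all the $(\bar\epsilon,\bar\delta)$ also contains $(\bar\varphi,\bar\psi)$ iff $(\bar\varphi,\bar\psi)$ lies in the smallest such congruence, namely $\theta$.

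Since this lemma is explicitly attributed to \cite[Lemma 2]{MMT2014}, the intended proof is essentially a citation together with a brief indication of why the statement holds; I would phrase the argument as above and then remark that the result is well known, referring the reader to \cite{MMT2014} or to standard universal algebra texts such as \cite{BS1981}. There is no substantial obstacle here: the only point requiring any care is the passage between ``$\Theta\Rightarrow\varphi\eq\psi$ holds in every algebra of $\sf V$'' and ``it holds in the single free algebra $\mathbf{F}_{\sf V}(X)$ with $X$ the variables involved,'' which uses that any counterexample in some $\mathbf{A}\in\sf V$ pulls back along the appropriate homomorphism from $\mathbf{F}_{\sf V}(X)$, the variables outside $X$ playing no role. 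I expect this verification to be the main (and only) thing worth spelling out, and even it is routine.

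\begin{proof}
This is \cite[Lemma 2]{MMT2014}; we recall the argument. Let $\theta=\bigvee_{\epsilon\eq\delta\in\Theta}\mathsf{Cg}^{\mathbf{F}_{\mathsf{V}}(X)}(\bar\epsilon,\bar\delta)$, which is the least congruence of $\mathbf{F}_{\mathsf{V}}(X)$ containing every pair $(\bar\epsilon,\bar\delta)$ with $\epsilon\eq\delta\in\Theta$. Suppose first that (1) holds, and let $q\colon\mathbf{F}_{\mathsf{V}}(X)\to\mathbf{F}_{\mathsf{V}}(X)/\theta$ be the quotient map. Since $\mathbf{F}_{\mathsf{V}}(X)/\theta\in\mathsf{V}$ and the assignment $x\mapsto q(\bar x)$ identifies $\epsilon$ and $\delta$ for every $\epsilon\eq\delta\in\Theta$ (because $(\bar\epsilon,\bar\delta)\in\theta$), it follows from (1) that this assignment also identifies $\varphi$ and $\psi$, i.e., $q(\bar\varphi)=q(\bar\psi)$. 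Hence $(\bar\varphi,\bar\psi)\in\theta$, which is (2).

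Conversely, suppose (2) holds, and let $\mathbf{A}\in\mathsf{V}$ together with an assignment $h\colon X\to A$ such that $h(\epsilon)=h(\delta)$ for every $\epsilon\eq\delta\in\Theta$. By the universal mapping property of the free algebra, $h$ extends uniquely to a homomorphism $\hat h\colon\mathbf{F}_{\mathsf{V}}(X)\to\mathbf{A}$, and for any term $t$ in the variables $X$ we have $\hat h(\bar t)=h(t)$. In particular $\hat h(\bar\epsilon)=\hat h(\bar\delta)$ for all $\epsilon\eq\delta\in\Theta$, so the kernel of $\hat h$ is a congruence of $\mathbf{F}_{\mathsf{V}}(X)$ containing each $(\bar\epsilon,\bar\delta)$, whence $\theta\subseteq\ker\hat h$. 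By (2), $(\bar\varphi,\bar\psi)\in\theta\subseteq\ker\hat h$, so $h(\varphi)=\hat h(\bar\varphi)=\hat h(\bar\psi)=h(\psi)$. Since $\mathbf{A}$ and $h$ were arbitrary, this establishes (1).
\end{proof}
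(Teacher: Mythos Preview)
The paper does not prove this lemma at all; it merely states that the result restates \cite[Lemma~2]{MMT2014} and moves on. Your proof is correct and supplies the standard universal-algebraic argument (via the universal mapping property of $\mathbf{F}_{\sf V}(X)$ and the characterization of $\theta$ as the least congruence containing the relevant pairs) that the paper leaves to the citation.
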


\begin{theorem}\label{theo: S4tL satisfies parametrized LDDT}
Let $I$ be a set of unary connectives with $I\cap\mathcal{L}=\emptyset$, and suppose that ${\m L}$ is an axiomatic extension of $\emph{\luc}(I)$ that is algebraized by the subvariety $\sf V$ of $\MV(I)$. Further, let $\Gamma\cup \Delta \cup \{\psi\} \subseteq Fm_{\mathcal{L}(I)}$. Then $\Gamma, \Delta \vdash_{\m L} \psi$ if and only if for some $n\geq 0$ there exist I-blocks $M_{1},\ldots,M_{n}$ and $\psi_{1},\ldots,\psi_{n}\in \Delta$ such that $\Gamma \vdash_{\m L} \prod_{j=1}^{n}M_{j}(\psi_{j})\rightarrow \psi$.
\end{theorem}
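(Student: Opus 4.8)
The plan is to use the algebraizability of ${\m L}$ together with the description of generated I-filters from Section~\ref{sec:deduction} to recast both sides of the asserted equivalence as membership statements in a suitable free algebra, where the product $\prod_{j}M_{j}(\psi_{j})$ is precisely what Lemma~\ref{lem:Generated Filter MVI} hands us. Fix a set $X$ of variables containing all those occurring in $\Gamma\cup\Delta\cup\{\psi\}$; since I-blocks introduce no new variables, every formula $\prod_{j=1}^{n}M_{j}(\psi_{j})\to\psi$ with $\psi_{1},\dots,\psi_{n}\in\Delta$ also has its variables in $X$, and I would work throughout in $\mathbf{F}_{\sf V}(X)$, writing $\overline{\chi}$ for the image of a formula $\chi$ under the natural map onto it. The first step is to establish, for any nonempty set $\Sigma$ of formulas over $X$ and any formula $\chi$ over $X$, the equivalence
\[\Sigma\vdash_{\m L}\chi \quad\Longleftrightarrow\quad \overline{\chi}\in\mathsf{Fg}^{\mathbf{F}_{\sf V}(X)}\big(\{\overline{\sigma}:\sigma\in\Sigma\}\big).\]
Indeed, by Corollary~\ref{cor:alg application}(1), $\Sigma\vdash_{\m L}\chi$ is equivalent to $\{\sigma\eq 1:\sigma\in\Sigma\}\models_{\sf V}\chi\eq 1$; by Lemma~\ref{lem: Technical lemma} this says $(\overline{\chi},\overline{1})\in\bigvee_{\sigma\in\Sigma}\mathsf{Cg}^{\mathbf{F}_{\sf V}(X)}(\overline{\sigma},\overline{1})$; and by the congruence/I-filter isomorphism of Lemma~\ref{lem:Congruence Filters} together with Lemma~\ref{cor:Generated filter intersection}(2), this congruence is the one corresponding to the I-filter $\frak{f}:=\mathsf{Fg}^{\mathbf{F}_{\sf V}(X)}(\{\overline{\sigma}:\sigma\in\Sigma\})$, so $(\overline{\chi},\overline{1})$ lies in it exactly when $\overline{\chi}\in\frak{f}$. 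Finally, by the explicit definition of $\mathsf{Fg}$ validated in Lemma~\ref{lem:Generated Filter MVI}, $\overline{\chi}\in\frak{f}$ holds iff $\prod_{i=1}^{m}N_{i}(\overline{\sigma_{i}})\leq\overline{\chi}$ in $\mathbf{F}_{\sf V}(X)$ for some $N_{1},\dots,N_{m}\in\mathcal{B}_{I}$ and $\sigma_{1},\dots,\sigma_{m}\in\Sigma$, where the case $m=0$ (which forces $\overline{\chi}=\overline{1}$) and the case $\Sigma=\emptyset$ are read off directly.

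For the forward direction I would apply this criterion with $\Sigma=\Gamma\cup\Delta$ and $\chi=\psi$ to obtain $N_{1},\dots,N_{m}\in\mathcal{B}_{I}$ and $\chi_{1},\dots,\chi_{m}\in\Gamma\cup\Delta$ with $\prod_{i=1}^{m}N_{i}(\overline{\chi_{i}})\leq\overline{\psi}$, and then split the factors according to whether $\chi_{i}\in\Gamma$ or $\chi_{i}\in\Delta$ (assigning ties to $\Delta$). After relabelling, the $\Delta$-factors form a product $\prod_{j=1}^{n}M_{j}(\overline{\psi_{j}})$ with $M_{j}\in\mathcal{B}_{I}$ and $\psi_{j}\in\Delta$, while the $\Gamma$-factors form a product $p$ of terms $N(\overline{\gamma})$ with $N\in\mathcal{B}_{I}$ and $\gamma\in\Gamma$; using commutativity of $\cdot$ and residuation this gives $p\leq\big(\prod_{j=1}^{n}M_{j}(\overline{\psi_{j}})\big)\to\overline{\psi}$. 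Since $p\in\mathsf{Fg}^{\mathbf{F}_{\sf V}(X)}(\{\overline{\gamma}:\gamma\in\Gamma\})$ and this I-filter is an up-set, it also contains $\overline{\prod_{j=1}^{n}M_{j}(\psi_{j})\to\psi}$, so invoking the displayed equivalence with $\Sigma=\Gamma$ yields $\Gamma\vdash_{\m L}\prod_{j=1}^{n}M_{j}(\psi_{j})\to\psi$ with $\psi_{1},\dots,\psi_{n}\in\Delta$ --- the required conclusion, with $n\geq 0$ and the empty product ($n=0$) read as $1$; the degenerate subcases ($\Gamma=\emptyset$, or one of the two products empty) reduce to statements checked directly.

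For the converse I would argue purely proof-theoretically. If $\psi_{j}\in\Delta$ then $\Delta\vdash_{\m L}\psi_{j}$, so iterating the rule ($\Box$-Nec) along the I-block $M_{j}$ gives $\Delta\vdash_{\m L}M_{j}(\psi_{j})$; the derivable rule $\alpha,\beta\vdash_{\luc}\alpha\cdot\beta$ (immediate from (A1), (A6), and (MP)) together with an induction on $n$ then gives $\Delta\vdash_{\m L}\prod_{j=1}^{n}M_{j}(\psi_{j})$, and one application of (MP) against the hypothesis $\Gamma\vdash_{\m L}\prod_{j=1}^{n}M_{j}(\psi_{j})\to\psi$ gives $\Gamma,\Delta\vdash_{\m L}\psi$ (the case $n=0$ being handled via $\vdash_{\luc}(1\to\psi)\bic\psi$). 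The step I expect to require the most care is keeping the translation dictionary of the first paragraph straight: Lemma~\ref{lem: Technical lemma} is phrased for the free algebra over exactly the variables occurring, whereas the criterion is invoked above for two formula sets whose variable sets may differ. I would reconcile this by observing that $\mathbf{F}_{\sf V}(X')$ embeds into $\mathbf{F}_{\sf V}(X)$ whenever $X'\subseteq X$ and that, by the argument proving Theorem~\ref{thm:tMV has CEP}, generated I-filters restrict correctly along such embeddings, so the criterion of the first paragraph may safely be evaluated in the single algebra $\mathbf{F}_{\sf V}(X)$ throughout. The remaining verifications --- the cited equivalences and the elementary splitting of the product --- are routine.
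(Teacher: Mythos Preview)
Your proposal is correct and takes essentially the same approach as the paper: both translate $\Gamma,\Delta\vdash_{\m L}\psi$ via Corollary~\ref{cor:alg application}, Lemma~\ref{lem: Technical lemma}, and Lemmas~\ref{lem:Congruence Filters}--\ref{cor:Generated filter intersection} into membership of $\bar\psi$ in $\mathsf{Fg}^{\mathbf{F}_{\sf V}(X)}(\bar\Gamma\cup\bar\Delta)$, invoke the explicit description of Lemma~\ref{lem:Generated Filter MVI}, split the resulting product into $\Gamma$- and $\Delta$-factors, residuate, and translate back. The only differences are presentational---you package the translation chain as a reusable criterion, you are more careful than the paper about the variable-set hypothesis of Lemma~\ref{lem: Technical lemma} (resolving it via the CEP argument of Theorem~\ref{thm:tMV has CEP}), and you handle the converse proof-theoretically via ($\Box$-Nec) and (MP) whereas the paper simply declares it ``similar''---but the substance is the same.
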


\begin{proof}
We give the proof of the left-to-right direction; the proof of the converse is similar. From Corollary~\ref{cor:alg application}(1) and Lemmas~\ref{lem: Technical lemma} and \ref{lem:Congruence Filters} we obtain:
\begin{displaymath}
\begin{array}{ccll}
\Gamma, \Delta \vdash_{\m L} \psi &  \Longrightarrow & \{\alpha \approx 1 : \alpha \in \Gamma \cup \Delta\}\models_{\sf V} \psi \eq 1 &
\\
 &\Longrightarrow & (\bar{\psi}, 1)\in \bigvee_{\alpha \in \Gamma \cup \Delta} \mathsf{Cg}^{\mathbf{F}_{\sf V}(X)}(\bar{\alpha},1) &
\\
 &\Longrightarrow & \bar{\psi} \in \mathsf{Fg}^{{\mathbf{F}_{\sf V}}(X)}( \bar{\Gamma} \cup \bar{\Delta}), &
\end{array}
\end{displaymath}
where $X$ is the set of variables appearing in $\Gamma\cup \Delta \cup \{\psi\}$. From Lemma~\ref{lem:Generated Filter MVI} there exist $l\geq 0$, I-blocks $M_{1},\ldots,M_{l}$, and $\bar{\chi}_{1},\dots, \bar{\chi}_{l}\in \bar{\Gamma} \cup \bar{\Delta}$ such that $M_{1}(\bar{\chi}_{1})\cdot\ldots\cdot M_{l}(\bar{\chi}_{l})\leq \bar{\psi}$. Let $D=\{j\in\{1,\ldots,l\} : \bar{\chi}_j\in\bar{\Delta}\}$, and set $C=D\setminus\{1,\ldots,l\}$. Then by the commutativity of $\cdot$ we have
$$\prod_{j\in C}M_{j}(\bar{\chi}_{j})\cdot \prod_{k\in D}M_{k}(\bar{\chi}_{k}) = M_{1}(\bar{\chi}_{1})\cdot\ldots\cdot M_{l}(\bar{\chi}_{l})\leq \bar{\psi},$$
whence by residuation $\prod_{j\in C}M_{j}(\bar{\chi}_{j})\leq \prod_{k\in D}M_{k}(\bar{\chi}_{k})\to \bar{\psi}$. Applying Lemma \ref{lem:Generated Filter MVI} again gives $\prod_{k\in D}M_{k}(\bar{\chi}_{k})\to \bar{\psi} \in \mathsf{Fg}^{\mathbf{F}_{\sf V}(X)}(\bar{\Gamma})$. Hence by Lemmas~\ref{lem:Congruence Filters} and \ref{lem: Technical lemma} and Corollary~\ref{cor:alg application}(1) we obtain $\Gamma \vdash_{\m L} \prod_{k\in D}M_{k}({\chi}_{k})\to \psi$.
\end{proof}

Notice that the form of the local deduction-detachment theorem announced in Corollary~\ref{theo: S4tL satisfies LDDT} may be recovered from Theorem~\ref{theo: S4tL satisfies parametrized LDDT} by taking $\Delta=\{\varphi\}$ and taking $d_M(p,q) = Mp\to q$ for $M\in\mathcal{B}_I$.

In the monomodal logic $\m S4\luc$, I-blocks take an especially simple form. Because $I=\{\Box\}$ in this setting, each I-block $M$ is a finite, nonempty string of occurrences of $\Box$. Since $\Box$ is idempotent in \varsfmv, for each $\{\Box\}$-block $M$ we have that $Mx\eq\Box x$ is satisfied in \varsfmv. Due to this consideration and the fact that $\Box$ preserves $\cdot$, we may read off the following simplified form the LDDT for $\m S4\luc$:

\begin{corollary}
Let $\Gamma\cup \Delta \cup \{\psi\} \subseteq Fm_{\mathcal{L}(\Box)}$. Then $\Gamma, \Delta \vdash_{\m S4\emph{\luc}} \psi$ if and only if for some $n\geq 0$ there exist $\psi_{1},\ldots,\psi_{n}\in \Delta$ such that $\Gamma \vdash_{\m S4\emph{\luc}} \Box(\prod_{j=1}^{n}\psi_{j})\rightarrow \psi$.
\end{corollary}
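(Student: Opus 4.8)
The plan is to derive this as a specialization of Theorem~\ref{theo: S4tL satisfies parametrized LDDT} with $I=\{\Box\}$, using the fact that in $\varsfmv$ every $\{\Box\}$-block collapses to $\Box$. First I would invoke Theorem~\ref{theo: S4tL satisfies parametrized LDDT}: since $\m S4\luc$ is an axiomatic extension of $\luc(\{\Box\})$ algebraized by $\varsfmv$ (Theorem~\ref{thm:algebraization}), we have $\Gamma,\Delta\vdash_{\m S4\luc}\psi$ if and only if there exist $n\geq 0$, $\{\Box\}$-blocks $M_1,\dots,M_n$, and $\psi_1,\dots,\psi_n\in\Delta$ with $\Gamma\vdash_{\m S4\luc}\prod_{j=1}^n M_j(\psi_j)\to\psi$. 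So the entire content of the corollary is to massage the term $\prod_{j=1}^n M_j(\psi_j)$ into the claimed form $\Box\bigl(\prod_{j=1}^n\psi_j\bigr)$.

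The key algebraic observation is that in $\varsfmv$ the identity $Mx\eq\Box x$ holds for every $\{\Box\}$-block $M$: indeed $M$ is a nonempty string $\Box\cdots\Box$, and $\Box$ being an interior operator satisfies $\Box\Box x\eq\Box x$, so by induction on the length of $M$ we get $Mx\eq\Box x$. Hence $\prod_{j=1}^n M_j(\psi_j)\eq\prod_{j=1}^n\Box\psi_j$ is valid in $\varsfmv$. Since $\Box$ preserves $\cdot$ (axiom (P$_\Box$), or clause (2) of the definition of $\mvi$-algebra), we also have $\prod_{j=1}^n\Box\psi_j\eq\Box\bigl(\prod_{j=1}^n\psi_j\bigr)$ in $\varsfmv$. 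Combining, $\prod_{j=1}^n M_j(\psi_j)\eq\Box\bigl(\prod_{j=1}^n\psi_j\bigr)$ holds in $\varsfmv$, and by the algebraization (Corollary~\ref{cor:alg application}) the corresponding equivalence $\prod_{j=1}^n M_j(\psi_j)\leftrightarrow\Box\bigl(\prod_{j=1}^n\psi_j\bigr)$ is a theorem of $\m S4\luc$, so the two antecedents are interderivable in $\m S4\luc$. Therefore $\Gamma\vdash_{\m S4\luc}\prod_{j=1}^n M_j(\psi_j)\to\psi$ if and only if $\Gamma\vdash_{\m S4\luc}\Box\bigl(\prod_{j=1}^n\psi_j\bigr)\to\psi$, which yields the stated form. (One should also note the degenerate case $n=0$, where the empty product is $1$ and $\Box 1\eq 1$ by ($1_\Box$), so the claim reads $\Gamma\vdash_{\m S4\luc}\psi$ on both sides, consistent with Theorem~\ref{theo: S4tL satisfies parametrized LDDT}.)

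I do not anticipate any real obstacle: every ingredient is already in place, and the only care needed is bookkeeping about the empty product and making the passage from the algebraic identity back to provability explicit via Corollary~\ref{cor:alg application}. If a maximally terse proof is preferred, one can simply write: ``This is immediate from Theorem~\ref{theo: S4tL satisfies parametrized LDDT}, since in $\varsfmv$ every $\{\Box\}$-block $M$ satisfies $Mx\eq\Box x$ (as $\Box$ is idempotent) and $\Box$ preserves $\cdot$, so $\prod_{j=1}^n M_j(\psi_j)$ and $\Box(\prod_{j=1}^n\psi_j)$ are equated in $\varsfmv$ and hence, by Corollary~\ref{cor:alg application}, their biconditional is a theorem of $\m S4\luc$.''
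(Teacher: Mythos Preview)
Your proposal is correct and follows exactly the approach the paper takes: the corollary is stated without a formal proof, and the surrounding text simply notes that in $\varsfmv$ every $\{\Box\}$-block collapses to $\Box$ by idempotence and that $\Box$ preserves $\cdot$, so the result is read off from Theorem~\ref{theo: S4tL satisfies parametrized LDDT}. Your version is slightly more explicit (handling $n=0$ and invoking Corollary~\ref{cor:alg application} for the transfer back to provability), but the argument is identical in substance.
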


If $I=\{\Box_{1},...\Box_{n}\}$ is finite, then particular forms of the LDDT can be achieved for ${\bf S4\luc}(I)$ and its extensions by defining an operator $\lambda(x)=\prod_{i=1}^{n} \Box_{i}(x)$. Powers of $\lambda$ are defined recursively by $\lambda^{0}(x)=x$ and $\lambda^{m+1}(x)=\lambda(\lambda^{m}(x))$ for $m>0$. I-filters of S4MV(I)-algebras may be characterized with powers of $\lambda$ instead of I-blocks. A full discussion of this alternative approach will appear in future work.

\section{Two translations}\label{sec:translations}

We now arrive at our main translation results. After discussing some necessary technical background regarding the Jipsen-Montagna poset product construction, we exhibit two translations. The first of these embeds $\m GBL$ into $\m S4\luc$, and is conceptually in the spirit of the classical G\"odel-McKinsey-Tarski translation of intuitionistic logic into $\m S4$. The second translation embeds $\sf GBL$ in $\m S4_t\luc$.

\subsection{Poset products} \label{sec:poset products}
The translation results of this paper rely heavily on the poset product construction of Jipsen and Montagna (see \cite{JM2009,JM2010}), which we now sketch. Our discussion of poset products is drawn mainly from \cite{F2021}, to which we refer the reader for a more detailed summary.

Let ${\m A}$ be a bounded commutative integral residuated lattice. A \emph{conucleus} on ${\m A}$ is an interior operator $\gamma$ on the lattice reduct of ${\m A}$ such that $\gamma(x)\gamma(y)\leq \gamma(xy)$ and $\gamma(x)\gamma(1)=\gamma(1)\gamma(x)=\gamma(x)$ for all $x,y\in A$. Given a conucleus $\gamma$ on ${\m A}$, the $\gamma$-image ${\m A}_\gamma = (A_\gamma,\meet_\gamma,\join,\cdot,\to_\gamma,0,\gamma(1))$ is a bounded commutative integral residuated lattice, where $A_\gamma=\gamma[A]$ and $x\star_\gamma y=\gamma(x\star y)$ for $\star\in\{\meet,\to\}$.

Now let $(X,\leq)$ be a poset, let $\{{\m A}_x : x\in X\}$ be an indexed collection of bounded commutative integral residuated lattices with a common least element $0$ and a common greatest element $1$, and let ${\m B} = \prod_{x\in X} {\m A}_x$. From \cite[Lemma 9.4]{JM2010}, one may define a conucleus on $\sigma$ on ${\m B}$ by 
\[ \sigma(f)(x) = \begin{cases}
      f(x) & \text{ if }f(y)=1\text{ for all }y>x \\
      0 & \text{ if there exists }y>x\text{ with } f(y)\neq 1.
   \end{cases}
\]
The algebra ${\m B}_\sigma$ is called the \emph{poset product} of $\{{\m A}_x : x\in X\}$, and is denoted $\prod_{(X,\leq)} {\m A}_x$. An element $f\in B_\sigma$ is called an \emph{antichain labeling} or \emph{ac-labeling}, and satisfies the condition that if $x,y\in X$ with $x<y$ then $f(x)=0$ or $f(y)=1$. The following is a direct consequence of \cite[Corollary 5.4(i)]{JM2010} and its proof.
\begin{lemma}\label{lem:JipsenMontagna}
Let ${\m A}$ be a GBL-algebra. Then there exists a poset $(X,\leq)$ and an indexed family $\{{\m A}_x : x\in X\}$ of totally ordered MV-algebras such that ${\m A}$ embeds in the poset product ${\m B}_\sigma$, where ${\m B}=\prod_{x\in X} {\m A}_x$.
\end{lemma}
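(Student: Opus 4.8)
The plan is to deduce Lemma~\ref{lem:JipsenMontagna} directly from the structure theory of GBL-algebras developed by Jipsen and Montagna, specializing their general decomposition of (bounded, commutative) GBL-algebras to the form we need. First I would recall that \cite[Corollary 5.4(i)]{JM2010} expresses every bounded commutative GBL-algebra as (a subalgebra of, or isomorphic to) a poset product of MV-algebras, or more precisely exhibits an embedding of the given GBL-algebra into a poset product whose factors are totally ordered MV-algebras; the bounded commutative integral setting is exactly the one to which our notion of GBL-algebra (Definition in Section~\ref{sec:residuated lattices}, with boundedness and commutativity always assumed) corresponds. So the first step is purely a matter of matching conventions: check that the ``GBL-algebras'' of \cite{JM2010} that satisfy boundedness and commutativity coincide with ours, so that the cited corollary applies verbatim.

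The second step is to extract from that corollary the specific data promised in the statement: a poset $(X,\leq)$, an indexed family $\{\mathbf{A}_x : x\in X\}$ of totally ordered MV-algebras with common bounds $0,1$, and an embedding of $\mathbf{A}$ into $\prod_{(X,\leq)}\mathbf{A}_x = \mathbf{B}_\sigma$ where $\mathbf{B}=\prod_{x\in X}\mathbf{A}_x$. The poset product as defined just above (via the conucleus $\sigma$) is precisely the construction \cite{JM2010} uses, so the notation aligns. The one point requiring a word of care is the hypothesis that all the factors share a common least element $0$ and common greatest element $1$: since MV-algebras are bounded, each $\mathbf{A}_x$ has its own bounds, and one should note that in forming the poset product one may harmlessly rename these so that the bounds are shared (or observe that the Jipsen--Montagna construction already produces factors with identified top and bottom, as their decomposition arises from a single algebra).

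The main (and essentially only) obstacle is bibliographic rather than mathematical: one must be confident that \cite[Corollary 5.4(i)]{JM2010} is stated for, or immediately yields, an \emph{embedding} into a poset product of \emph{chains} (totally ordered MV-algebras), rather than merely a subdirect representation or a representation with arbitrary MV-algebra factors. If the cited result gives only a subdirect embedding into a product of poset products of MV-algebras, one additionally uses that each MV-algebra is a subdirect product of MV-chains (Chang's completeness-style decomposition, \cite{CDM2000}) and that poset products are compatible with such decompositions---e.g.\ a poset product over $(X,\leq)$ with each factor replaced by a subdirect product of chains embeds into a poset product over a suitably enlarged poset with chain factors. This reduction step, if needed, is the part where one must be slightly careful that the conucleus $\sigma$ behaves well under passing to subalgebras and products of the factors; but since $\sigma$ is defined pointwise in terms of the value $1$, which is preserved by all the maps involved, this goes through routinely. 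Hence the proof is essentially a citation of \cite[Corollary 5.4(i)]{JM2010} together with its proof, as the statement of the lemma already indicates.
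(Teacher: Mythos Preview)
Your proposal is correct and matches the paper's approach exactly: the paper does not give an independent argument but simply states that the lemma is a direct consequence of \cite[Corollary 5.4(i)]{JM2010} and its proof, which is precisely what you do (with some added care about matching conventions and the chain-factor form that the cited result already delivers).
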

Following \cite{J2009}, for a poset $(X,\leq)$ and indexed family $\{{\m A}_x : x \in X\}$ we introduce a map $\delta$ on ${\m B}=\prod_{x\in X} {\m A}_x$ by
\[ \delta(f)(x) = \begin{cases}
      f(x) & \text{ if }f(y)=0\text{ for all }y<x \\
      1 & \text{ if there exists }y<x\text{ with } f(y)\neq 0.
   \end{cases}
\]
The following lemma is crucial for our translation result.
\begin{lemma}\label{lem:poset products are tmv}
Let $(X,\leq)$ be poset, let $\{{\m A}_x : x\in X\}$ be an indexed family of bounded commutative integral residuated lattices, and set ${\m B}=\prod_{x\in X} {\m A}_x$ as above. Then:
\begin{enumerate}
\item $\sigma$ and $\neg\delta\neg$ are $\{\meet,\cdot,0,1\}$-endomorphisms of ${\m B}$.
\item For all $f,g\in B$, $f\leq\sigma(g)$ if and only if $\delta(f)\leq g$.
\item $\neg\delta\neg$ is an interior operator.
\item If additionally ${\m A}_x$ is an MV-algebra for all $x\in X$, then $({\m B},\sigma)$ is an \sfmv-algebra and $({\m B},\sigma,\neg\delta\neg)$ is \tmv-algebra.
\end{enumerate}
\end{lemma}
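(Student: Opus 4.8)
The plan is to verify the four items in sequence, leveraging the structural results about conuclei and their De Morgan duals. For item (1), the claim about $\sigma$ follows from the fact that $\sigma$ is a conucleus on $\m B$ (cited from \cite[Lemma 9.4]{JM2010}): conuclei are interior operators satisfying $\sigma(x)\sigma(y)\leq\sigma(xy)$, but in the poset product setting one checks directly from the case definition that in fact $\sigma(f)\sigma(g)=\sigma(fg)$, $\sigma(f\meet g)=\sigma(f)\meet\sigma(g)$, $\sigma(0)=0$, and $\sigma(1)=1$ — the key observation being that the defining condition ``$f(y)=1$ for all $y>x$'' interacts well with pointwise $\meet$ and $\cdot$ since $1$ is the monoid unit and top element. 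For $\neg\delta\neg$, I would first establish the analogous preservation properties for $\delta$: by a symmetric argument (swapping $\leq$ with $\geq$, $0$ with $1$), $\delta$ preserves $\join$, $\cdot$, $0$, and $1$, and is a closure operator on the lattice reduct. Then conjugating by the order-reversing involution $\neg$ (which holds pointwise since each $\m A_x$ is an MV-algebra, hence each $\m B=\prod\m A_x$ has $\neg$ as an involution) turns $\join$-preservation into $\meet$-preservation and a closure operator into an interior operator, giving item (3) essentially for free. One must be slightly careful that $\neg\delta\neg$ preserves $\cdot$: this uses that in an MV-algebra $\neg(x\cdot y)$ is \emph{not} simply $\neg x\cdot\neg y$, so instead I would argue that $\delta$ preserves the dual operation $x\oplus y=\neg(\neg x\cdot\neg y)$ and transport back, or alternatively verify $\cdot$-preservation of $\neg\delta\neg$ directly from the case split.

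For item (2), the residuation-like adjunction $f\leq\sigma(g)\iff\delta(f)\leq g$, I would argue pointwise-by-cases. If $\delta(f)\leq g$, then at any $x$ with some $y<x$ satisfying $f(y)\neq 0$ we have $\delta(f)(x)=1\leq g(x)$, which combined with the ac-labeling-type reasoning forces $f\leq\sigma(g)$; conversely if $f\leq\sigma(g)$ then wherever $\sigma(g)(x)=0$ we need $f(x)=0$, which propagates downward under $\delta$. The cleanest route is probably to recall that $\delta$ and $\sigma$ are built as the closure/interior operators associated to the same underlying monotone structure on $X$, so this adjunction is the abstract Galois-connection fact; but since the paper only cites $\sigma$ from \cite{JM2010} and introduces $\delta$ fresh (following \cite{J2009}), I would give the explicit pointwise verification.

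Finally, item (4) is mostly a bookkeeping assembly. Assuming each $\m A_x$ is an MV-algebra, $\m B$ is an MV-algebra (MV-algebras form a variety, closed under products), so $(\m B,\sigma)$ satisfies condition (1) of Definition~\ref{def:MV(I)-alg} by item (1) here, and $\sigma$ being a conucleus makes it in particular an interior operator, so $(\m B,\sigma)$ is an \sfmv-algebra. For $(\m B,\sigma,\neg\delta\neg)$ to be an \tmv-algebra with $G=\sigma$ and $H=\neg\delta\neg$: it is an \sfi-algebra for $I=\{G,H\}$ by items (1) and (3) together with $\sigma$ being an interior operator; and the residuation condition $x\leq G(y)\iff P(x)\leq y$ with $P=\neg H\neg=\neg(\neg\delta\neg)\neg=\delta$ is exactly item (2). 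So the \tmv-axiom holds on the nose.

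I expect the main obstacle to be item (1) for the operator $\neg\delta\neg$ — specifically confirming that it is a $\cdot$-endomorphism. Preservation of $\meet$, $0$, $1$ transfers transparently through the involution, but multiplication is the subtle point because $\neg$ does not distribute over $\cdot$ in MV-algebras; one genuinely has to either work with the dual sum $\oplus$ and check $\delta(f\oplus g)=\delta(f)\oplus\delta(g)$ from the case definition (noting $f\oplus g$ is $0$ at $x$ iff both $f$ and $g$ are $0$ at $x$, which matches the downward-hereditary condition defining $\delta$), or do a direct four-way case analysis. The rest of the proof is routine case-chasing on the two-case definitions of $\sigma$ and $\delta$, and I would compress it accordingly rather than writing out every subcase.
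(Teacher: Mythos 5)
Your proposal is correct and follows essentially the same route as the paper: direct case analysis on the defining conditions of $\sigma$ and $\delta$ for item (1), pointwise verification of the adjunction for item (2), conjugation of the closure operator $\delta$ by the antitone involution $\neg$ for item (3), and the same assembly for item (4). The obstacle you flag for $\cdot$-preservation of $\neg\delta\neg$ dissolves exactly as in your second suggested route, which is what the paper does: the case split only ever asks when $\neg(f\cdot g)(y)=0$, i.e.\ when $f(y)=g(y)=1$, so no distribution of $\neg$ over $\cdot$ (and no detour through $\oplus$) is needed.
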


\begin{proof}
1. It is obvious that $\sigma(0)=0$ and $\sigma(1)=1$. Let $\star\in\{\meet,\cdot\}$, $x\in X$, and $f,g\in B$, and observe that if $y>x$ then $(f\star g)(y) = 1$ if and only if $f(y)=g(y)=1$. It follows that if $(f\star g)(y) = 1$ for all $y>x$, then $\sigma(f\star g)(x)=(f\star g)(x)=f(x)\star g(x)=\sigma(f)(x)\star\sigma(g)(x)$, and if otherwise then $\sigma(f\star g)(x) = 0 = \sigma(f)(x)\star\sigma(g)(x)$. Thus $\sigma(f\star g)=\sigma(f)\star\sigma(g)$. 

To prove that $\neg\delta\neg$ is a $\{\meet,\cdot,0,1\}$-endomorphism, again let $\star\in\{\meet,\cdot\}$, $x\in X$, and $f,g\in B$. Note that for all $y<x$ we have $\neg (f\star g)(y) = 0$ if and only if $(f\star g)(y)=1$, and as before this occurs if and only if $f(y)=g(y)=1$. Thus we have $\neg (f\star g)(y)=0$ for all $y<x$ if and only if $\neg f(y) = 0$ for all $y<x$ and $\neg g(y)=0$ for all $y<x$. Hence if $\neg (f\star g)(y)=0$ for all $y<x$, then we have $\neg\delta\neg (f\star g)(x) = \neg\neg (f\star g)(x)=f(x)\star g(x)=\neg\neg f(x)\star\neg\neg g(x)=\neg\delta\neg f(x)\star\neg\delta\neg g(x)$. On the other hand, if there exists $y<x$ with $\neg (f\star g)(y)\neq 0$, then $\neg\delta\neg (f\star g)(x)=\neg 1 = 0$, and $\neg\delta\neg f(x)\star\neg\delta\neg g(x) = 0$ since one of $\delta\neg f(y)$ or $\delta\neg g(y)$ must be $1$. Since $\neg\delta\neg 0 = 0$ and $\neg\delta\neg 1 = 1$ by direct calculation, item 1 follows.

2. Suppose $f\leq \sigma(g)$ and let $x\in X$. Since $\sigma$ is an interior operator, $f(x)\leq\sigma(g)(x)\leq g(x)$. If $\delta(f)(x)=f(x)$, then $\delta(f)(x)\leq g(x)$ is immediate. On the other hand, if $\delta(f)(x)\neq f(x)$ then there exists $y<x$ such that $f(y)\neq 0$. From $f\leq \sigma(g)$ we infer that $\sigma(g)(y)\neq 0$, so $\sigma(g)(x)=1$ since $\sigma(g)$ is an ac-labeling. Thus $\delta(f)(x)\leq 1 = \sigma(g)(x)=g(x)$. It follows that $\delta(f)\leq g$. The proof that $\delta(f)\leq g$ implies $f\leq\sigma(g)$ is similar.

3. It is easy to see that $\delta$ is a closure operator. From this and the fact that $\neg$ is an antitone involution, it is a straightforward calculation to show that $\neg\delta\neg$ is an interior operator.

4. Under the hypothesis, ${\m B}$ is a product of MV-algebras and is hence an MV-algebra. That $({\m B},\sigma)$ is an \sfmv-algebra follows promptly from item 1 and the fact that $\sigma$ is an interior operator. That $({\m B},\sigma,\neg\delta\neg)$ is a \tmv-algebra follows from items 1, 2, and 3.
\end{proof}

\begin{lemma}\label{lem: Embedding lemma S4L and S4tL}
Suppose that $({\m A},\Box)$ is an \sfmv-algebra and $({\m B},G,H)$ is a \tmv-algebra. Then both $\Box$ and $G$ are conuclei, and each of ${\m A}_\Box$ and ${\m B}_G$ is a GBL-algebra.
\end{lemma}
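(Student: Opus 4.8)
The plan is to check directly that $\Box$ and $G$ meet the definition of a conucleus, and then to verify the divisibility identity $x(x\to y)\eq x\meet y$ in the conuclear images ${\m A}_\Box$ and ${\m B}_G$; every other property of conuclear images that we need is recorded in the discussion preceding Lemma~\ref{lem:JipsenMontagna}.

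First I would show that $\Box$ is a conucleus on ${\m A}$. By the definition of an \sfmv-algebra, $\Box$ is an interior operator on the lattice reduct of ${\m A}$, and it is also a $\{\meet,\cdot,0,1\}$-endomorphism of ${\m A}$. Consequently $\Box(x)\Box(y)=\Box(xy)$, so in particular $\Box(x)\Box(y)\leq\Box(xy)$; moreover $\Box(x)\Box(1)=\Box(x)\cdot 1=\Box(x)$ since $\Box 1=1$. These are exactly the two conditions (beyond being an interior operator) in the definition of a conucleus, so $\Box$ is a conucleus. The same reasoning applies to $G$ in the \tmv-algebra $({\m B},G,H)$: since $({\m B},G,H)$ is in particular an \sfi-algebra with $I=\{G,H\}$, the operator $G$ is simultaneously an interior operator and a $\{\meet,\cdot,0,1\}$-endomorphism of ${\m B}$, hence a conucleus.

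It remains to show that ${\m A}_\Box$ and ${\m B}_G$ are GBL-algebras. A conuclear image of a bounded commutative integral residuated lattice is again a bounded commutative integral residuated lattice, so it suffices to verify divisibility in each; we do so for ${\m A}_\Box$, the argument for ${\m B}_G$ being identical. Recall that $A_\Box=\Box[A]$ is the set of fixed points of $\Box$, that the monoid operation of ${\m A}_\Box$ is inherited from ${\m A}$, and that $x\to_\Box y=\Box(x\to y)$ and $x\meet_\Box y=\Box(x\meet y)$. Let $x,y\in A_\Box$, so that $\Box x=x$. Since ${\m A}$ is an MV-algebra, hence a GBL-algebra, divisibility holds in ${\m A}$, that is, $x(x\to y)=x\meet y$. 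Applying $\Box$, using that $\Box$ preserves $\cdot$, and recalling $\Box x=x$, we obtain
\[
x\cdot(x\to_\Box y)=x\cdot\Box(x\to y)=\Box x\cdot\Box(x\to y)=\Box\bigl(x\cdot(x\to y)\bigr)=\Box(x\meet y)=x\meet_\Box y.
\]
Hence ${\m A}_\Box$ satisfies divisibility and is a GBL-algebra, and the same holds for ${\m B}_G$.

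The computation is routine, and I do not expect a real obstacle here; the only point needing care is bookkeeping about which operations of a conuclear image are twisted by the conucleus and which are inherited unchanged. Divisibility transfers precisely because $\Box$ (respectively $G$) is a $\cdot$-endomorphism that fixes its own image pointwise, which lets divisibility in ${\m A}$ (respectively ${\m B}$) be pushed through the operator.
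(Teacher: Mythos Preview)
Your proof is correct and follows essentially the same approach as the paper: you verify the conucleus conditions directly from the $\{\meet,\cdot,0,1\}$-endomorphism and interior-operator properties (the paper simply says ``by definition''), and your divisibility computation in ${\m A}_\Box$ is line-for-line the same as the paper's. The only cosmetic difference is that the paper phrases the divisibility argument once for an arbitrary conucleus $\gamma$ on an MV-algebra that preserves $\cdot$ and $\meet$, whereas you run it for $\Box$ and note that $G$ is identical.
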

\begin{proof}
Each of $\Box$ and $G$ is a conucleus by definition. For each claim, it suffices to show that if $\m M$ is an MV-algebra and $\gamma$ is a conucleus on ${\m M}$ preserving $\cdot$ and $\meet$, then ${\m M}_\gamma$ is a GBL-algebra. For this, it is enough to show that ${\m M}_\gamma$ satisfies the divisibility identity $x\cdot (x\to y)\eq x\meet y$. Let $x,y\in M_\gamma$. Since ${\m M}$ is an MV-algebra, we have that $x\cdot^{\m M} (x\to^{\m M} y) = x\meet^{\m M} y$. Using the fact that $\gamma$ preserves $\cdot$ and $\meet$, and that $x,y$ are $\gamma$-fixed, we have:
\begin{align*}
x\cdot^{{\m M}_\gamma} (x\to^{{\m M}_\gamma} y) &= x\cdot^{\m M} \gamma(x\to^{\m M} y)\\
&= \gamma(x)\cdot^{\m M} \gamma(x\to^{\m M} y)\\
&= \gamma(x\cdot^{\m M} (x\to^{\m M} y))\\
&= \gamma(x\meet^{\m M} y)\\
&= \gamma(x)\meet^{{\m M}_\gamma} \gamma(y)\\
&= x\meet^{{\m M}_\gamma} y.
\end{align*}
This proves the claim.
\end{proof}

\subsection{The translations}\label{sec:main theorems}

We define a pair of translations $M$ and $T$ from the language $\mathcal{L}$ into the languages $\sm$ and $\tm$, respectively. We set $M(p)=\Box p$ for each $p\in\var$, $M(0)=0$, $M(1)=1$, and we extend $M$ recursively by
$$M(\varphi\star\psi) = M(\varphi)\star M(\psi) \text{, for }\star\in\{\meet,\join,\cdot\}\text{, and}$$
$$M(\varphi\to\psi) = \Box (M(\varphi)\to M(\psi)).$$
Further, if $\Gamma$ is a set of formulas of $\mathcal{L}$ then we define
$$M(\Gamma) = \{M(\varphi) : \varphi\in\Gamma\}.$$
The translation $T$ differs from $M$ only by replacing $\Box$ by $G$ and considering its codomain to be formulas of $\tm$ rather those those of $\sm$.

\begin{lemma}\label{lem:tMV and GBL}
Let $({\m A},\Box)$ be an \sfmv-algebra, and let $({\m B},G,H)$ be a \tmv-algebra.
\begin{enumerate}
\item Suppose that $h\colon \var\to ({\m A},\Box)$ is an assignment, and define $\bar{h}\colon \var \to {\m A}_\Box$ by $\bar{h}(p)=\Box(h(p))$. If $\varphi\in Fm_{\mathcal{L}}$, then $\bar{h}(\varphi)=h(M(\varphi))$.
\item If $\varphi\in Fm_{\mathcal{L}}$, then $\varphi\eq 1$ is valid ${\m A}_\Box$ if and only if $M(\varphi)\eq 1$ is valid in ${\m A}$.
\item Suppose that $h\colon \var\to ({\m B},G,H)$ is an assignment, and define $\bar{h}\colon \var \to {\m B}_G$ by $\bar{h}(p)=G(h(p))$. If $\varphi\in Fm_{\mathcal{L}}$, then $\bar{h}(\varphi)=h(T(\varphi))$.
\item If $\varphi\in Fm_{\mathcal{L}}$, then $\varphi\eq 1$ is valid ${\m B}_G$ if and only if $T(\varphi)\eq 1$ is valid in ${\m A}$.
\end{enumerate}
\end{lemma}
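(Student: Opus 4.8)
The plan is to prove all four items by induction on the complexity of $\varphi$, treating items (1)--(2) and (3)--(4) in parallel since the translation $T$ differs from $M$ only by substituting the conucleus $G$ for $\Box$. I would first reduce items (2) and (4) to items (1) and (3): once $\bar{h}(\varphi) = h(M(\varphi))$ is known for every assignment $h\colon \var \to ({\m A},\Box)$, the equation $\varphi \eq 1$ holds in ${\m A}_\Box$ exactly when $\bar{h}(\varphi) = \gamma(1)$ for every such $h$ (where $\gamma(1) = \Box 1 = 1$ is the top of ${\m A}_\Box$), which by item (1) is equivalent to $h(M(\varphi)) = 1$ for every $h$, i.e., $M(\varphi) \eq 1$ holds in ${\m A}$. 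Here I would note that every assignment into ${\m A}_\Box$ arises as $\bar{h}$ for some assignment $h$ into ${\m A}$ (indeed, since $\Box$ is idempotent, the elements of $A_\Box$ are precisely the $\Box$-fixed points, so an assignment $\var \to A_\Box$ is literally an assignment $\var \to A$ landing in fixed points, and applying $\bar{\cdot}$ to it does not change it). The analogous reduction handles item (4) from item (3).

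For item (1), the base cases are: $\varphi = p \in \var$, where $\bar{h}(p) = \Box(h(p)) = h(\Box p) = h(M(p))$ by definition; and $\varphi \in \{0,1\}$, where both sides equal the corresponding constant since $\Box 0 = 0$ and $\Box 1 = 1$ and $M(0)=0$, $M(1)=1$. For the inductive step with $\star \in \{\meet, \join, \cdot\}$, I would compute $\bar{h}(\varphi \star \psi) = \bar{h}(\varphi) \star^{{\m A}_\Box} \bar{h}(\psi)$; for $\star \in \{\meet, \cdot\}$ this is $\Box(\bar{h}(\varphi) \star^{\m A} \bar{h}(\psi))$ by the definition of the operations in ${\m A}_\Box$, but since $\bar{h}(\varphi), \bar{h}(\psi)$ are $\Box$-fixed and $\Box$ preserves $\star$, this collapses to $\bar{h}(\varphi) \star^{\m A} \bar{h}(\psi)$, and the induction hypothesis plus the definition of $M$ finish it. The case $\star = \join$ is immediate since $\join$ is interpreted identically in ${\m A}$ and ${\m A}_\Box$. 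For the implication case, $\bar{h}(\varphi \to \psi) = \bar{h}(\varphi) \to^{{\m A}_\Box} \bar{h}(\psi) = \Box(\bar{h}(\varphi) \to^{\m A} \bar{h}(\psi))$, which by the induction hypothesis equals $\Box(h(M(\varphi)) \to^{\m A} h(M(\psi))) = h(\Box(M(\varphi) \to M(\psi))) = h(M(\varphi \to \psi))$ as desired. Item (3) is word-for-word the same argument with $G$ in place of $\Box$, using that $({\m B}, G, H)$ being a \tmv-algebra makes $G$ a $\{\meet, \cdot, 0, 1\}$-endomorphism and an interior operator (hence a conucleus, with ${\m B}_G$ the GBL-algebra of Lemma~\ref{lem: Embedding lemma S4L and S4tL}).

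I do not expect a genuine obstacle here; the proof is a routine structural induction. The one point requiring a little care is the bookkeeping around the operations $\meet_\gamma$ and $\to_\gamma$ of the conucleus image: one must remember that in ${\m M}_\gamma$ these are \emph{defined} by $x \star_\gamma y = \gamma(x \star y)$, so the $\gamma$ (i.e., $\Box$ or $G$) does not magically disappear for $\to$, which is exactly why $M(\varphi \to \psi)$ carries a leading $\Box$ while $M(\varphi \star \psi)$ for $\star \in \{\meet,\cdot\}$ does not — there the outer $\gamma$ is absorbed because $\gamma$ is a $\star$-endomorphism fixing its own image. Stating this cleanly, and being explicit that all of $\bar h(\varphi), \bar h(\psi)$ lie in $A_\gamma = \gamma[A]$ and are therefore $\gamma$-fixed, is the only thing that needs attention. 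A minor typo to silently correct: item (4) should read ``valid in ${\m B}$'' rather than ``valid in ${\m A}$'' on the right-hand side.
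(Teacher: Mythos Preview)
Your proposal is correct and follows essentially the same route as the paper: a structural induction for items (1) and (3), with items (2) and (4) reduced to them via the observation that assignments into ${\m A}_\Box$ are exactly the $\bar h$ for assignments $h$ into ${\m A}$ landing in $\Box$-fixed points. One tiny inaccuracy: in the conucleus image ${\m A}_\gamma$ as defined in the paper, only $\meet$ and $\to$ carry the outer $\gamma$, while $\cdot$ and $\join$ are inherited unchanged --- so your claim that $\bar h(\varphi)\cdot^{{\m A}_\Box}\bar h(\psi)=\Box(\bar h(\varphi)\cdot^{\m A}\bar h(\psi))$ ``by definition'' is not literally right, though your subsequent argument that the $\Box$ is absorbed makes the conclusion correct regardless.
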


\begin{proof}
We will prove items 1 and 2. Item 3 follows by a proof identical to that of item 1 by replacing $\Box$ by $G$, $M$ by $T$, and $({\m A},\Box)$ by $({\m B},G,H)$. Similarly, item 4 follows from the same proof given for item 2. 

1. We argue by induction on the height of $\varphi$. If $\varphi$ is a constant or $\varphi\in\var$, then the statement is true by assumption. Now suppose that for all formulas $\varphi'$ of height strictly less than the height of $\varphi$ we have that $\bar{h}(\varphi')=h(M(\varphi'))$. If $\varphi=\psi\star\chi$ for $\star\in\{\cdot,\meet,\join\}$, then by definition $h(M(\varphi))=h(M(\psi\star\chi))=h(M(\psi)\star M(\chi))=h(M(\psi))\star h(M(\chi))$. By the inductive hypotheses, the latter is precisely $\bar{h}(\psi)\star \bar{h}(\chi)=\bar{h}(\psi\star\chi)=\bar{h}(\varphi)$ as desired. On the other hand, if $\varphi=\psi\to\chi$ then we have that $h(M(\varphi))=h(M(\psi\to\chi))=h(\Box(M(\psi)\to M(\chi)))=\Box(h(M(\psi))\to h(M(\chi)))$. By the inductive hypothesis, this term is equal to $\Box(\bar{h}(\psi)\to\bar{h}(\chi))=\bar{h}(\psi)\to^{{\m A}_\Box}\bar{h}(\chi) = \bar{h}(\psi\to\chi)=\bar{h}(\varphi)$. The result follows by induction.

2. Suppose first that $\varphi\eq 1$ is valid in ${\m A}_\Box$, and let $h\colon\var\to ({\m A},\Box)$ be an assignment. By item 1, $\bar{h}\colon\var\to {\m A}_\Box$ is an assignment in ${\m A}_\Box$ and $\bar{h}(\psi)=h(M(\psi))$ for all $\psi\in Fm_\mathcal{L}$. In particular, this shows that $h(M(\varphi))=\bar{h}(\varphi)=1$ since $\varphi\eq 1$ is valid in ${\m A}_\Box$, so as $h$ is arbitrary we have $M(\varphi)\eq 1$ is valid in ${\m A}$.

For the converse, suppose that $M(\varphi)\eq 1$ is valid in ${\m A}$ and let $h\colon\var\to {\m A}_\Box$ be an assignment. Because $A_\Box\subseteq A$, we may define a new assignment $k\colon\var\to ({\m A},\Box)$ by $k(p)=h(p)$ for all $p\in\var$. Since $M(\varphi)\eq 1$ is valid in ${\m A}$, we have $k(M(\varphi))=1$. By item 1, we have that $k(M(\varphi))=\hat{k}(\varphi)$, where $\hat{k}\colon\var\to {\m A}_\Box$ is defined by $\hat{k}(p)=\Box(k(p))$. Notice that since $k$ has its image among the $\Box$-fixed elements of $A$, we have for all $p\in\var$ that $\hat{k}(p)=\Box(k(p))=k(p)=h(p)$, and thus $\hat{k}=h$. From this we obtain that $h(\varphi)=\hat{k}(\varphi)=k(M(\varphi))=1$, so $\varphi\eq 1$ is valid in ${\m A}_\Box$.
\end{proof}

The following gives the main translation results of this paper.

\begin{theorem}\label{thm:translation}
Let $\Gamma\cup\{\varphi\}\subseteq Fm_{\mathcal{L}}$. Then:
\begin{enumerate}
\item $\{\psi\eq 1 : \psi\in\Gamma\}\vDash_{\sf GBL} \varphi\eq 1 \iff \{M(\psi)\eq 1 : \psi\in\Gamma\}\vDash_{\sf S4MV} M(\varphi)\eq 1.$
\item $\Gamma\vdash_{\bf GBL} \varphi \iff M(\Gamma)\vdash_{\m S4\emph{\luc}} M(\varphi).$
\item $\{\psi\eq 1 : \psi\in\Gamma\}\vDash_{\sf GBL} \varphi\eq 1 \iff \{T(\psi)\eq 1 : \psi\in\Gamma\}\vDash_{\sf S4_tMV} T(\varphi)\eq 1.$
\item $\Gamma\vdash_{\bf GBL} \varphi \iff T(\Gamma)\vdash_{\m S4_t\emph{\luc}} T(\varphi).$
\end{enumerate}
\end{theorem}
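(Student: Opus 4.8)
The plan is to reduce everything to the algebraic statements (1) and (3), and then reduce those in turn to the embedding machinery of Section~\ref{sec:poset products}. Items (2) and (4) follow from (1) and (3) by algebraization: since $\bf GBL$ is algebraized by $\sf GBL$ with defining equation $\varphi\eq 1$, and $\m S4\luc$ (resp.\ $\m S4_t\luc$) is algebraized by $\sf S4MV$ (resp.\ $\sf S4_tMV$) by Theorem~\ref{thm:algebraization}, Corollary~\ref{cor:alg application}(1) lets us translate $\Gamma\vdash_{\bf GBL}\varphi$ into $\{\psi\eq 1:\psi\in\Gamma\}\models_{\sf GBL}\varphi\eq 1$ and likewise on the modal side; so (2) is just (1) rephrased, and (4) is just (3) rephrased. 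It therefore suffices to prove the two consequence-preservation statements (1) and (3) for the algebraic semantics.

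For (1), the forward direction is the soft one. Suppose $\{\psi\eq 1:\psi\in\Gamma\}\models_{\sf GBL}\varphi\eq 1$, take an $\sf S4MV$-algebra $({\m A},\Box)$ and an assignment $h$ with $h(M(\psi))=1$ for all $\psi\in\Gamma$. By Lemma~\ref{lem: Embedding lemma S4L and S4tL}, $\Box$ is a conucleus and ${\m A}_\Box$ is a GBL-algebra; by Lemma~\ref{lem:tMV and GBL}(1) the assignment $\bar h(p)=\Box(h(p))$ into ${\m A}_\Box$ satisfies $\bar h(\psi)=h(M(\psi))$, so $\bar h(\psi)=1$ for $\psi\in\Gamma$, hence $\bar h(\varphi)=1$ by hypothesis, hence $h(M(\varphi))=1$. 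For the reverse direction, suppose $\{M(\psi)\eq 1\}\models_{\sf S4MV} M(\varphi)\eq 1$, take a GBL-algebra ${\m C}$ and an assignment $g$ into ${\m C}$ with $g(\psi)=1$ for all $\psi\in\Gamma$. By Lemma~\ref{lem:JipsenMontagna}, ${\m C}$ embeds into a poset product ${\m B}_\sigma$ of totally ordered MV-algebras; by Lemma~\ref{lem:poset products are tmv}(4), $({\m B},\sigma)$ is an $\sf S4MV$-algebra, and crucially ${\m B}_\sigma = ({\m B})_\sigma$ is exactly its conucleus image. Composing the embedding ${\m C}\hookrightarrow {\m B}_\sigma$ with $g$ gives an assignment $g'$ into ${\m B}_\sigma=({\m B})_\sigma$ with $g'(\psi)=1$ for $\psi\in\Gamma$; one then wants an assignment into ${\m B}$ that recovers $g'$ on the conucleus image. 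Here we invoke Lemma~\ref{lem:tMV and GBL}(1,2) in the contrapositive style of the proof of Lemma~\ref{lem:tMV and GBL}(2): since $g'$ already takes values among the $\sigma$-fixed elements, the assignment $h(p)=g'(p)$ into $({\m B},\sigma)$ satisfies $\bar h=g'$, so $h(M(\psi))=\bar h(\psi)=g'(\psi)=1$ for $\psi\in\Gamma$, whence $h(M(\varphi))=1$ by hypothesis, whence $g'(\varphi)=\bar h(\varphi)=h(M(\varphi))=1$, and finally $g(\varphi)=1$ because the embedding reflects equations. The argument for (3) is word-for-word the same with $\Box$ replaced by $G$, $M$ by $T$, $\sf S4MV$ by $\sf S4_tMV$, Lemma~\ref{lem:tMV and GBL}(1,2) by Lemma~\ref{lem:tMV and GBL}(3,4), and using the $\sf S4_tMV$-part of Lemma~\ref{lem:poset products are tmv}(4) and the $G$-part of Lemma~\ref{lem: Embedding lemma S4L and S4tL}; the $H$-structure on the poset product is supplied automatically by $\neg\delta\neg$ and plays no further role in the translation, since $T$ never mentions $H$.

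I expect the main obstacle to be purely bookkeeping rather than conceptual: one must be careful that the conucleus image ${\m B}_\sigma$ used as the target of the Jipsen--Montagna embedding in Lemma~\ref{lem:JipsenMontagna} is literally $({\m B})_\sigma$ for the $\sf S4MV$-algebra $({\m B},\sigma)$ of Lemma~\ref{lem:poset products are tmv}, so that Lemma~\ref{lem:tMV and GBL}(2) applies on the nose; and that ``valid'' versus ``preserves this particular assignment'' is tracked correctly through the embedding (embeddings reflect the truth of equations, which is what lets us pull $g(\varphi)=1$ back from $g'(\varphi)=1$). Once the identifications ${\m B}_\sigma=({\m B})_\sigma$ and $\bf{GBL}\rightsquigarrow\sf{GBL}$ are pinned down, each implication is a two-line diagram chase through the cited lemmas.
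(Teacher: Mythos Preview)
Your proposal is correct and follows essentially the same route as the paper's proof: reduce (2) and (4) to (1) and (3) via algebraization (Corollary~\ref{cor:alg application}), prove the forward direction of (1) by passing to the GBL-algebra ${\m A}_\Box$ via Lemmas~\ref{lem: Embedding lemma S4L and S4tL} and \ref{lem:tMV and GBL}, and prove the converse by embedding an arbitrary GBL-algebra into a poset product via Lemmas~\ref{lem:JipsenMontagna} and \ref{lem:poset products are tmv} and then pulling back along Lemma~\ref{lem:tMV and GBL}. Your bookkeeping remarks about identifying ${\m B}_\sigma$ with the conucleus image of the \sfmv-algebra $({\m B},\sigma)$ and about embeddings reflecting equations are exactly the points that need to be checked, and you have them right.
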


\begin{proof}
We first prove item 1. Suppose that $\{\psi\eq 1 : \psi\in\Gamma\}\vDash_{\bf GBL} \varphi\eq 1$, let $({\m A},\Box)$ be an \sfmv-algebra, and let $h\colon\var\to ({\m A},\Box)$ be an assignment. We aim to show $\{M(\psi)\eq 1 : \psi\in\Gamma\}\vDash_{\sf S4MV} M(\varphi)\eq 1$, so suppose that for all $\psi\in\Gamma$ we have $h(M(\psi))=1$. By Lemma~\ref{lem:tMV and GBL}(2) we have that $1=h(M(\psi))=\bar{h}(\psi)$. Since $\bar{h}$ is an assignment in ${\m A}_\Box$ (which is a GBL-algebra by Lemma~\ref{lem: Embedding lemma S4L and S4tL}), by hypothesis we have $\bar{h}(\varphi)=1$. Applying Lemma~\ref{lem:tMV and GBL}(2) again yields $h(M(\varphi))=1$, showing that $\{M(\psi)\eq 1 : \psi\in\Gamma\}\vDash_{\sf S4MV} M(\varphi)\eq 1.$

For the converse, suppose that $\{M(\psi)\eq 1 : \psi\in\Gamma\}\vDash_{\sf S4MV} M(\varphi)\eq 1$. Let ${\m A}$ be a GBL-algebra, let $h\colon\var\to {\m A}$ be an assignment, and suppose that $h(\psi)=1$ for all $\psi\in\Gamma$. It is enough to show that $h(\varphi)=1$. By Lemmas~\ref{lem:JipsenMontagna} and \ref{lem:poset products are tmv}, there exists an \sfmv-algebra $({\m B},\Box)$ such that ${\m A}$ embeds in ${\m B}_\Box$, and without loss of generality we may assume that this embedding is an inclusion. Using the fact that $A\subseteq B_\Box\subseteq B$, we define a new assignment $k\colon\var\to {\m B}$ by $k(p)=h(p)$ for all $p\in\var$. Notice that for all $p\in\var$ we have $\bar{k}(p)=\Box k(p) = \Box h(p) = h(p)$ since the image of $h$ consists of $\Box$-fixed elements, so by Lemma~\ref{lem:tMV and GBL}(2) we have $h(\chi)= k(M(\chi))$ for all $\chi$. In particular, $k(M(\psi))=1$ for all $\psi\in\Gamma$, and by the hypothesis we have $k(M(\varphi))=1$. But this implies $h(\varphi)=1$, proving the result.

Note that item 2 follows from Corollary~\ref{cor:alg application} since we have:
\begin{align*}
\Gamma\vdash_{\bf GBL}\varphi &\iff \{\psi\eq 1 : \psi\in\Gamma\}\vDash_{\sf GBL} \varphi\eq 1\\
&\iff \{M(\psi)\eq 1 : \psi\in\Gamma\}\vDash_{\sf S4MV} M(\varphi)\eq 1\\
&\iff M(\Gamma)\vdash_{\bf S4{\luc}} M(\varphi).
\end{align*}
Items 3 and 4 follows by proofs analogous to those given for items 1 and 2, respectively.
\end{proof}

As a final remark, we note that the temporal translation articulated in Theorem~\ref{thm:translation}(3,4) generalizes the translation offered in \cite{AGM2008}. \emph{G\"odel-Dummett logic} is the extension of propositional intuitionistic logic by the axiom scheme $(\varphi\to\psi)\join (\psi\to\varphi)$, and is algebraized by the variety of G\"odel algebras (which coincide with BL-algebras satisfying $x^2\eq x$). In \cite{AGM2008}, the authors deploy the temporal flow semantics (see \cite{ABM2009}) based on so-called bit sequences to exhibit a translation of G\"odel-Dummett logic into an axiomatic extension of Prior's classical tense logic. The present study was inspired by \cite{F2021}, which offers a relational semantics based on poset products that, among other things, generalizes the temporal flow semantics (see \cite[Section 4.2]{F2021}). Our development of the translations above can hence be thought of as extending the work of \cite{AGM2008} along the generalization offered in \cite{F2021}. Poset products give a powerful, unifying framework for inquiries of this kind, and we anticipate that they will find far-reaching application to translations. A thorough investigation of modal translations and modal companions for {\bf GBL} remains to be conducted, but we expect that the work in this paper to be an important preliminary step.

%
%
%

\bibliographystyle{splncs04}

\begin{thebibliography}{10}
\providecommand{\url}[1]{\texttt{#1}}
\providecommand{\urlprefix}{URL }
\providecommand{\doi}[1]{https://doi.org/#1}

\bibitem{ABM2009}
Aguzzoli, S., Bianchi, M., Marra, V.: A temporal semantics for basic logic.
  Studia Logica  \textbf{92},  147--162 (2009)

\bibitem{AGM2008}
Aguzzoli, S., Gerla, B., Marra, V.: Embedding {G}{\"o}del propositional logic
  into {P}rior's tense logic. In: {M}agdalena, L., {O}jeda {A}ciego, M.,
  {V}erdegay, J. (eds.) Proc. 12th International Conference Information
  Processing and Management of Uncertainty for Knowledge-Based Systems. pp.
  992--999 (2008)

\bibitem{BP1989}
Blok, W.J., Pigozzi, D.: Algebraizable logics. Mem. Amer. Math. Soc.  \textbf{77}
  (1989)

\bibitem{BP1991}
Blok, W.J., Pigozzi, D.: Local deduction theorems in algebraic logic. In:
  Andr\'eka, H., Monk, J., N\'emeti, I. (eds.) Algebraic Logic, Colloquia
  Mathematica Societatis J\'anos Bolyai, vol.~54, pp. 75--109. North-Holland,
  Amsterdam (1991)

\bibitem{BM2009}
Bova, S., Montagna, F.: The consequence relation in the logic of commutative
  {GBL}-algebras is {PSPACE}-complete. Theoret. Comp. Sci.  \textbf{410},
  1143--1158 (2009)

\bibitem{BS1981}
Burris, S., Sankappanavar, H.: A Course in Universal Algebra. Springer-Verlag
  (1981)

\bibitem{CZ1992}
Chagrov, A., Zakharyaschev, M.: Modal companions of intermediate propositional
  logics. Studia Logica  \textbf{51},  49--82 (1992)

\bibitem{CDM2000}
Cignoli, R., D'Ottaviano, I., Mundici, D.: Algebraic Foundations of Many-Valued
  Reasoning. Trends in Logic---Studia Logica Library, Kluwer Academic
  Publishers (2000)

\bibitem{CT2003}
Cignoli, R., Torrens, A.: H{\'a}jek's basic fuzzy logic and {Ł}ukasiewicz
  infinite-valued logic. Arch. Math. Logic  \textbf{42},  361--370 (2003)

\bibitem{EGR2017}
Esteva, F., Godo, L., Rodr{\'i}guez, R.: On the relation between modal and
  multi-modal logics over {Ł}ukasiewicz logic. In: Proc. 2017 {IEEE}
  International Conference on Fuzzy Systems ({FUZZ-IEEE}), pp.~1--6. Naples,
  Italy (2017)

\bibitem{F2016}
Font, J.: Abstract Algebraic Logic: An Introductory Textbook. College
  Publications (2016)

\bibitem{F2021}
Fussner, W.: Poset products as relational models  (2021), manuscript. Available
  at arXiv:2012.01247.

\bibitem{GJ2009}
Galatos, N., Jipsen, P.: A survey of generalized basic logic algebras. In:
  Cintula, P., Hanikova, Z., Svejdar, V. (eds.) Witnessed Years: Essays in
  Honour of Petr H{\'a}jek, pp. 305--331. College Publications (2009)

\bibitem{GJKO2007}
Galatos, N., Jipsen, P., Kowalski, T., Ono, H.: Residuated Lattices: An
  Algebraic Glimpse at Substructural Logics. Elsevier (2007)

\bibitem{GO2006}
Galatos, N., Ono, H.: Algebraization, parametrized local deduction theorem and
  interpolation for substructural logics over {${\bf FL}$}. Studia Logica
  \textbf{83},  279--308 (2006)

\bibitem{H1998}
H{\'a}jek, P.: Metamathematics of Fuzzy Logic. Trends in Logic---Studia Logica
  Library, Kluwer (1998)

\bibitem{J2009}
Jipsen, P.: Generalizations of {B}oolean products for lattice-ordered algebras.
  Ann. Pure Appl. Logic  \textbf{161},  228--234 (2009)

\bibitem{JM2006}
Jipsen, P., Montagna, F.: On the structure of generalized {BL}-algebras.
  Algebra Universalis  \textbf{55},  226--237 (2006)

\bibitem{JM2009}
Jipsen, P., Montagna, F.: The {B}lok-{F}erreirim theorem for normal
  {GBL}-algebras and its applications. Algebra Universalis  \textbf{60},
  381--404 (2009)

\bibitem{JM2010}
Jipsen, P., Montagna, F.: Embedding theorems for classes of {GBL}-algebras. J.
  Pure Appl. Algebra  \textbf{214},  1559--1575 (2010)

\bibitem{MMT2014}
Metcalfe, G., Montagna, F., Tsinakis, C.: Amalgamation and interpolation in
  ordered algebras. J. Algebra  \textbf{402},  21--82 (2014)

\bibitem{PO1999}
{O}'{H}earn, P., Pym, D.: The logic of bunched implications. Bull. Symbolic
  Logic  \textbf{5},  215--244 (1999)

\bibitem{P1957}
Prior, A.: Time and Modality. Clarendon Press, Oxford (1957)

\end{thebibliography}

\end{document}